\newcommand{\bC}{\mathbb{C}}
\newcommand{\bP}{\mathbb{P}}
\newcommand{\bN}{\mathbb{N}}
\newcommand{\rd}{\mathrm{d}}
\newcommand{\supp}{\operatorname{supp}}
\newcommand{\Int}{\operatorname{int}}
\newcommand{\bR}{\mathbb{R}}
\newcommand{\loc}{\operatorname{loc}}
\newcommand{\bD}{\mathbb{D}}
\newcommand{\bZ}{\mathbb{Z}}
\newcommand{\bQ}{\mathbb{Q}}
\numberwithin{equation}{section}
\theoremstyle{plain}
\newtheorem{theorem}{Theorem}[section]
\newtheorem{lemma}[theorem]{Lemma}
\newtheorem{mainth}{Theorem}
\theoremstyle{definition}
\newtheorem{notation}[theorem]{Notation}
\newtheorem*{acknowledgement}{Acknowledgement}
\theoremstyle{remark}
\newtheorem{remark}[theorem]{Remark}
\begin{document}  
  
\title[
The sequences of the derivatives of iterated polynomials]{Value distribution of
the sequences of the derivatives of iterated polynomials}

\author[Y\^usuke Okuyama]{Y\^usuke Okuyama}
\address{
Division of Mathematics,
Kyoto Institute of Technology,
Sakyo-ku, Kyoto 606-8585 Japan.}
\email{okuyama@kit.ac.jp}

\date{\today}

\begin{abstract}
We establish the equidistribution of the sequence of the averaged pullbacks
of a Dirac measure at any value in $\bC\setminus\{0\}$ under the derivatives
of the iterations of a polynomials $f\in\bC[z]$ of degree more than one towards
the $f$-equilibrium (or canonical) measure $\mu_f$ on $\bP^1$. 
We also show that for every $C^2$ test function on $\bP^1$,
the convergence is exponentially fast up to a polar subset of exceptional values 
in $\bC$. A parameter space analog of the latter quantitative result for
the monic and centered unicritical polynomials family is also established.
\end{abstract}

\subjclass[2010]{Primary 37F10; Secondary 30D35, 32H50}
\keywords{Value distribution, equidistribution, quantitative equidistribution,
derivative, iterated polynomials,
monic and centered unicritical polynomials family, complex dynamics, Nevanlinna theory}

\maketitle

\section{Introduction}\label{sec:intro}

Let $f\in\bC[z]$ be a polynomial of degree $d>1$. Let
$\mu_f$ be the $f$-equilibrium (or canonical) measure on $\bP^1$, which 
coincides with
the harmonic measure $\mu_{K(f)}$ on the filled-in Julia set $K(f)$ of $f$
with respect to $\infty$. The exceptional set 
$E(f):=\{a\in\bP^1:\#\bigcup_{n\in\bN}f^{-n}(a)<\infty\}$
of $f$ contains $\infty$ and $\#E(f)\le 2$.
Brolin \cite[Theorem 16.1]{Brolin}
studied the value distribution of the sequence $(f^n:\bP^1\to\bP^1)$ of the iterations of $f$, and established
\begin{gather}
 \left\{a\in\bP^1:\lim_{n\to\infty}\frac{(f^n)^*\delta_a}{d^n}
 =\mu_f\text{ weakly on }\bP^1\right\}=\bP^1\setminus E(f),\label{eq:Brolin}
\end{gather}
which is more precise than the classical inclusion
$\partial K(f)\subset\overline{\bigcup_{n\in\bN}f^{-n}(a)}$
for every $a\in\bP^1\setminus E(f)$.
Here for every $h\in\bC(z)$ of degree $>0$ and
every Radon measure $\nu$ on $\bP^1$, 
the pullback $h^*\nu$ of $\nu$ under $h$ is 
a Radon measure on $\bP^1$ so that 
for every $a\in\bP^1$, when $\nu=\delta_a$,
$h^*\delta_a=\sum_{w\in h^{-1}(a)}(\deg_w h)\delta_a$ on $\bP^1$.
Pursuing the analogy between
the roles played by $E(f)$ in \eqref{eq:Brolin} and 
by the set of {\itshape Valiron exceptional values} in $\bP^1$ of
a transcendental meromorphic function on $\bC$,
Sodin \cite{Sodin92}, Russakovskii--Sodin \cite{RS95}, and 
Russakovskii--Shiffman \cite{RS97} (see also \cite{ES90}, \cite{okuyamaVNP})
studied the value distribution of
a {\itshape sequence of} rational maps between projective spaces
from the viewpoint of {\itshape Nevanlinna theory}, in a quantitative way
(cf.\ \cite[Chapter V, \S2]{Tsuji59}). 
Gauthier and Vigny \cite[1.\ in Theorem A]{GV16derivative}
studied the value distribution of
the sequence $((f^n)':\bP^1\to\bP^1)$ of the {\itshape derivatives} of iterations of 
a polynomial $f\in\bC[z]$ of degree $>1$ (cf.\ \cite{Yamanoi13}) 
possibly with a polar subset of exceptional values in $\bC\setminus\{0\}$,
in terms of dynamics of the {\itshape tangent map} $F(z,w):=(f(z),f'(z)w)$
on the tangent bundle $T\bC$. 
The aim of this article is to improve their result
in two ways.

The first improvement of \cite[1.\ in Theorem A]{GV16derivative} is
qualitative, but with no exceptional values. 

\begin{mainth}\label{th:derivative}
Let $f\in\bC[z]$ be of degree $d>1$. 
Then for every $a\in\bC\setminus\{0\}$, 
\begin{gather*}
 \lim_{n\to\infty}\frac{((f^n)')^*\delta_a}{d^n-1}=\mu_f
\end{gather*}
weakly on $\bP^1$.
\end{mainth}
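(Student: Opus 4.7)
The plan is to prove the equidistribution at the level of subharmonic potentials. Set
\[
u_n(z) := \frac{1}{d^n-1}\log|(f^n)'(z)-a|,
\]
a subharmonic function on $\bC$ satisfying $dd^c u_n = \nu_n := (d^n-1)^{-1}((f^n)')^*\delta_a$, a probability measure. Let $g_f$ be the dynamical Green function of $f$ on $\bC$, so that $dd^c g_f = \mu_f$ and $g_f(z) = \log|z|+\gamma_f+o(1)$ at infinity, with $\gamma_f := (d-1)^{-1}\log|c|$ and $c$ the leading coefficient of $f$. I will show $u_n\to g_f$ in $L^1_{\loc}(\bC)$; taking $dd^c$ yields weak convergence $\nu_n\to\mu_f$ on $\bC$, and for $a\ne 0$ all zeros of $(f^n)'-a$ lie in a fixed compact of $\bC$ (since $|(f^n)'(z)|\gtrsim|z|^{d^n-1}$ for $|z|$ large), so no mass escapes to infinity and the conclusion extends to $\bP^1$.

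The chain rule $(f^n)'(z)=\prod_{k=0}^{n-1}f'(f^k(z))$, combined with the estimate $\log|f'(w)|\le(d-1)\log^+|w|+O(1)$ and the uniform convergence on $\bC$ of $d^{-k}\log^+|f^k(z)|$ to $g_f(z)$, yields via a Ces\`aro-type argument the upper bound $u_n(z)\le g_f(z)+o(1)$ uniformly on compacts of $\bC$. In particular $\{u_n\}$ is precompact in $L^1_{\loc}(\bC)$, and every subsequential limit $u$ is subharmonic with $u\le g_f$ a.e.; the explicit asymptotic $u_n(z)-\log|z|\to (d^n-1)^{-1}\log|d^n c^{(d^n-1)/(d-1)}|\to\gamma_f$ at infinity passes to $u$, so $\nu:=dd^c u$ is a probability measure on $\bC$ with the correct logarithmic pole.

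The decisive step is to show that every subsequential limit $\nu$ is supported on $J(f)=\partial K(f)$. On the basin of infinity $\bC\setminus K(f)$, $|f^k(z)|\to\infty$ uniformly on compacts; the chain-rule estimate sharpens to $(d^n-1)^{-1}\log|(f^n)'(z)| \to g_f(z)$ locally uniformly, and since $|(f^n)'(z)|$ dominates $|a|$ there, also $u_n\to g_f$ locally uniformly on $\bC\setminus K(f)$. On each bounded Fatou component of $f$ (necessarily an attracting or parabolic basin, or a Siegel disc, as polynomials have no Herman rings), the Fatou classification gives $|(f^n)'|$ uniformly bounded on compacts, and for $a\ne 0$ a Jensen-type bound controls the number of $a$-level points of $(f^n)'$ in fixed compacts uniformly in $n$, giving $\int \log|(f^n)'-a|\,dm = O(1)$ uniformly and hence $u_n\to 0=g_f$ in $L^1_{\loc}$ on bounded Fatou components. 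Combining, $u_n\to g_f$ in $L^1_{\loc}(\bC\setminus J(f))$, so $\nu$ is supported on $J(f)$; combined with $U^\nu=U^{\mu_f}$ on the connected open set $\bC\setminus K(f)$, the classical unicity of logarithmic potentials of probability measures on $\partial K$ for non-polar compact $K\subset\bC$ with connected complement (applicable since $\bC\setminus K(f)$ is the connected basin of infinity and $K(f)$ has positive capacity) forces $\nu=\mu_f$, giving $\nu_n\to\mu_f$ weakly on $\bP^1$.

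The main obstacle is the Fatou-component analysis above, where one must control $|(f^n)'|$ and its $a$-level sets uniformly in $n$ on each bounded Fatou component, requiring the Fatou classification and Jensen-type bounds that are especially delicate for Siegel discs (where $(f^n)'$ can oscillate). The hypothesis $a\ne 0$ is essential precisely here: for $a=0$ the zeros of $(f^n)'$ are critical points of $f^n$, whose number in fixed compacts of a Fatou component is not uniformly bounded in $n$, and $\nu_n$ can concentrate on an exceptional super-attracting orbit, as in the case $f(z)=z^d$ where $\nu_n=\delta_0\ne\mu_f$.
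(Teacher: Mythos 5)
Your overall strategy is the same as the paper's: pass to the potentials $u_n=(d^n-1)^{-1}\log|(f^n)'-a|$, extract $L^1_{\loc}$-subsequential limits, identify the limit with $g_f$ on the basin of infinity (where the chain rule and $g_f\circ f=d\cdot g_f$ give locally uniform convergence), and then rule out a defect on the bounded Fatou components. The upper bound $u_n\le g_f+o(1)$, the non-escape of mass to $\infty$ for $a\ne 0$, and the final unicity step are all fine and match the paper.

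The gap is in your treatment of the bounded Fatou components, and it sits exactly where the paper's real work is. For attracting and parabolic components your claim is easy for a different reason than the one you give: there $f^{n}$ tends to a constant along the relevant subsequence, so $(f^{n})'\to 0\neq a$ locally uniformly and $\log|(f^n)'-a|\to\log|a|$ is bounded. The problem is the Siegel disc case, and a ``Jensen-type bound on the number of $a$-level points'' does not close it, for two reasons. First, Jensen's formula bounds the number of $a$-points by $\log\sup|(f^n)'-a|-\log|(f^n)'(\zeta)-a|$ at a reference point $\zeta$, so it \emph{presupposes} the lower bound you are trying to prove. Second, even a uniform bound on the number of $a$-points would not give $\int\log|(f^n)'-a|\,dm\ge -O(1)$: a zero-free holomorphic function can be uniformly as close to $a$ as it likes (e.g.\ $(f^{n})'$ essentially constant $\approx\lambda^{n/p}$ near the centre of a Siegel disc, with $\lambda^{n_j/p}\to a$ along a subsequence when $|a|=1$). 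The dangerous scenario, which your argument does not exclude, is that $|(f^{n_j})'-a|\le e^{-c\,d^{n_j}}$ on the whole component along some subsequence, in which case $u_{n_j}$ converges to a strictly negative limit there and the defect survives.

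The paper excludes precisely this: it shows that if a subsequential limit $\phi$ satisfies $\phi<g_f=0$ on a bounded component $U$, then $f^{n_j}|U$ converges to a map $g$ with $g'\equiv a\ne 0$, which by Hurwitz and the Fatou classification forces $U$ to land on a Siegel disc; it then proves the key estimate \eqref{eq:nonlinearity}, namely $\liminf_j d^{-n_j}\log|\lambda^{(n_j-n_N)/p}-\lambda_0|\ge 0$, \emph{not} from arithmetic hypotheses on the rotation number but from a Bernstein--Walsh/Cauchy-estimate argument: $f^{n_j}-g$ is a polynomial of degree $d^{n_j}$ whose modulus is comparable to $e^{d^{n_j}g_f}>1$ at points of $I_\infty(f)$ arbitrarily close to $\partial U$, so it cannot be $e^{-c\,d^{n_j}}$-small on an interior disc of $U$. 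Your proposal contains no substitute for this step, and it is the heart of the proof.
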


In Theorem \ref{th:derivative}, the values $a=0,\infty$ are excluded since
it is clear that for every $n\in\bN$,
$((f^n)')^*\delta_\infty/(d^n-1)=\delta_\infty(\neq\mu_f)$, 
and it immediately follows from \eqref{eq:Brolin} and the chain rule that
$\lim_{n\to\infty}((f^n)'\delta_0)/(d^n-1)=\mu_f$ weakly
on $\bP^1$ if and only if $E(f)=\{\infty\}$.
In Gauthier--Vigny \cite[2.\ and 3.\ in Theorem A]{GV16derivative},
they also established a result similar to Theorem \ref{th:derivative} under the
assumption that $f$ has no Siegel disks (or the assumption that
$f$ is hyperbolic). Our proof of Theorem \ref{th:derivative}
is independent of their argument even in those cases.

The second improvement of \cite[1.\ in Theorem A]{GV16derivative}
is quantitative, 
but with an at most polar subset of exceptional values
in $\bC$.

\begin{mainth}\label{th:quantitative}
Let $f\in\bC[z]$ be of degree $d>1$, and suppose that $E(f)=\{\infty\}$. 
Then for every 
$\eta>\sup_{z\in\bC:\text{superattracting periodic point of }f}\limsup_{n\to\infty}(\deg_z(f^n))^{1/n}$, there is a polar subset $E=E_{f,\eta}$ in $\bC$ such that
for every $a\in\bC\setminus E$
and every $C^2$-test function $\phi$ on $\bP^1$,  
\begin{gather*}
\int_{\bP^1}\phi\rd\biggl(\frac{((f^n)')^*\delta_a}{d^n-1}-\mu_f\biggr)=o((\eta/d)^n)
\end{gather*}
as $n\to\infty$.
\end{mainth}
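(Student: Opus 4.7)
The plan is to recast the equidistribution as a potential-theoretic difference and then exclude bad $a$-values by a Borel--Cantelli argument for subharmonic functions. Set
\[
u_n^a(z) := \frac{1}{d^n-1}\log\bigl|(f^n)'(z) - a\bigr|,
\]
so that $dd^c u_n^a = ((f^n)')^*\delta_a/(d^n-1)$, and by Stokes' theorem on $\bP^1$,
\[
\int_{\bP^1}\phi\,\rd\biggl(\frac{((f^n)')^*\delta_a}{d^n-1} - \mu_f\biggr) = \int_{\bP^1}(u_n^a - g_f)\,dd^c\phi,
\]
where $g_f$ is the Green function of $K(f)$ with pole at $\infty$, so that $\mu_f = dd^c g_f$. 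Since $\phi\in C^2$, $dd^c\phi$ has bounded density against Fubini--Study area, so the problem reduces to bounding $\int_K |u_n^a - g_f|\,\rd A = o((\eta/d)^n)$ on a fixed large compact $K\subset\bC$ for each $a$ outside a polar subset of $\bC$.

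On $\bC\setminus K(f)$ I would use the B\"ottcher coordinate $\Phi$ at infinity, satisfying $\Phi\circ f=\Phi^d$ and $\log|\Phi|=g_f$. Telescoping the derivative of the functional equation yields
\[
(f^n)'(z) = d^n\,\Phi(z)^{d^n-1}\,\frac{\Phi'(z)}{\Phi'(f^n(z))},
\]
hence
\[
u_n^a(z) - g_f(z) = \frac{n\log d+\log|\Phi'(z)|-\log|\Phi'(f^n(z))|}{d^n-1} + \frac{\log|1 - a/(f^n)'(z)|}{d^n-1}.
\]
Since $\log|\Phi'|$ stays bounded near $\infty$ and $|(f^n)'|\to\infty$ on the basin of $\infty$, both terms are $O(n/d^n)$ uniformly on compacts of $\bC\setminus K(f)$ and for $a$ in a bounded set, comfortably inside the target rate.

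The heart of the argument is on and near $K(f)$, where $g_f\equiv 0$ while $(f^n)'$ accumulates zeros along $\bigcup_{k<n}f^{-k}(\{f'=0\})$; at a superattracting periodic point of period $p$ and local degree $\ell$ the multiplicity of these zeros grows as fast as $\ell^{n/p}$. The upper bound $u_n^a\le n\log\|f'\|_{K(f)}/(d^n-1)$ is immediate, so the real work is the matching lower bound. I would introduce a Nevanlinna-type characteristic such as
\[
T_n(a) := \int_K \bigl(g_f(z) - u_n^a(z)\bigr)_+\,\rd A(z),
\]
and observe that $a\mapsto \int_K u_n^a(z)\,\rd A(z)$ is, up to a bounded term, the logarithmic potential in $a$ of the pushforward $((f^n)')_*(\rd A|_K)$; a Jensen-type identity then dominates $T_n(a)$ by a sum of a bounded subharmonic function in $a$ and a dynamical error controlled by the local-degree growth of $f^n$ at critical periodic points. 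The hypothesis on $\eta$ makes this error $O((\eta/d)^n)$, and a Borel--Cantelli-type exclusion for subharmonic functions, applied to $\sum_n(d/\eta)^n T_n(a)$, produces an at most polar set $E_{f,\eta}\subset\bC$ off which $T_n(a)=o((\eta/d)^n)$.

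The principal obstacle is the dynamical estimate for the critical-orbit contribution. Zeros of $(f^n)'$ produced by preimages of a superattracting cycle pile up exponentially fast and each makes $u_n^a$ very negative on a small neighborhood; absorbing their cumulative effect into a polar exceptional set of $a$-values requires precisely that the growth rate $\ell^{n/p}$ of local degrees be beaten by $\eta^n$, which is the content of the hypothesis on $\eta$. Once this dynamical bookkeeping is in hand, combining the exterior $O(n/d^n)$ estimate with the interior $o((\eta/d)^n)$ estimate, together with $\int_{\bP^1} dd^c\phi = 0$, yields the claimed bound for every $a\notin E_{f,\eta}$.
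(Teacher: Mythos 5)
Your overall architecture is the same as the paper's: reduce to an $L^1(\omega)$ estimate for $\frac{1}{d^n-1}\log|(f^n)'-a|-g_f$, strip off the $a$-dependence as a logarithmic potential of $((f^n)')_*\omega$ and discard a polar set of bad $a$ by a Borel--Cantelli argument for potentials (this is exactly Russakovskii--Shiffman \cite[Proposition 6.2]{RS97}, the paper's Lemma \ref{th:Sodin}), and control the remaining $a$-independent term by the critical orbits. But the decisive step is missing. By the identity \eqref{eq:difference}, the $a$-independent term equals
\begin{gather*}
\frac{1}{d^n-1}\int_{\bC}\Biggl(\sum_{j=0}^{n-1}\log[f^j(z),w]\Biggr)(\rd\rd^c\log|f'|)(w)+O(nd^{-n}),
\end{gather*}
so its $L^1(\omega)$ norm is governed by the mean proximity functions $\int_{\bP^1}\log\frac{1}{[f^j(z),w]}\,\rd\omega(z)$ of the \emph{iterates} at the \emph{critical points} $w$ of $f$. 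Showing these are $o(\eta^j)$ is not ``dynamical bookkeeping'' of local degrees at superattracting cycles: it is a quantitative equidistribution theorem (Drasin--Okuyama \cite[Theorem 2]{DOproximity}), valid precisely for targets $w\in\bP^1\setminus E(f)$, and this is the \emph{only} place the hypothesis $E(f)=\{\infty\}$ enters. Your proposal never invokes that hypothesis, which is the clearest symptom that the key estimate has been asserted rather than proved.

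Three secondary problems. First, the exterior estimate ``uniformly on compacts of $\bC\setminus K(f)$'' is false whenever a critical point escapes: then $(f^n)'$ has zeros in $I_\infty(f)$ accumulating on $J(f)$, and the B\"ottcher coordinate does not extend to all of $\bC\setminus K(f)$ when $K(f)$ is disconnected; one only gets locally uniform convergence off $\bigcup_{n\ge 0}f^{-n}(C(f)\cap\bC)$, as in the paper's Lemma \ref{th:critical}, which suffices for the $L^1$ purpose but must be argued differently. Second, your diagnosis of the obstruction is off: for $a\neq 0$, near a high-multiplicity zero of $(f^n)'$ one has $u_n^a\approx\log|a|/(d^n-1)\to 0=g_f$, so those zeros do \emph{not} make $u_n^a$ very negative; the true source of the $(\eta/d)^n$ rate is that $|(f^n)'|$ itself is super-exponentially small (like $e^{-c\ell^{n/p}}$) on an open part of the superattracting basin, which enters through the proximity integrals above. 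Third, $T_n(a)=\int_K(g_f-u_n^a)_+\,\rd A$ is not sub- or superharmonic in $a$ because of the positive part, so the polar-set exclusion cannot be applied to it directly; it should be run on $\int_{\bP^1}\log\frac{1}{[(f^n)'(z),a]}\,\rd\omega(z)$ itself, as in the paper.
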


The proof of Theorem \ref{th:quantitative}
is based on Russakovskii--Shiffman \cite{RS97} mentioned above,
and on an improvement of it 
for the {\itshape sequence of the iterations} of a rational function
of degree $>1$
by Drasin and the author \cite{DOproximity}
(see also \cite{DS10} and \cite{Taflin11adv}
in higher dimensions).

\begin{remark}
 Under the assumption $E(f)=\{\infty\}$ in Theorem \ref{th:quantitative}, 
we have
 $\sup_{z\in\bC:\text{superattracting periodic point of }f}
 \limsup_{n\to\infty}(\deg_z(f^n))^{1/n}\in\{1,2,\ldots,d-1\}$, 
 and $=1$ if and only if 
 there is no superattracting cycles of $f$ in $\bC$.
Here we adopt the convention $\sup_{\emptyset}=1$. 
 In the case that $E(f)\neq\{\infty\}$, 
we point out the following better estimate than that in Theorem \ref{th:quantitative}
 \begin{gather*}
 \int_{\bP^1}\phi\rd\biggl(\frac{((f^n)')^*\delta_a}{d^n-1}-\mu_f\biggr)=O(nd^{-n})\quad\text{as }n\to\infty
 \end{gather*}
 for every $a\in\bC\setminus\{0\}$
 and every $C^2$-test function $\phi$ on $\bP^1$, with no exceptional values;
 indeed, we can assume that $f(z)=z^d$ without loss of generality (see Remark \ref{th:affine}), 
 and then $f^n(z)=z^{d^n}$ for every $n\in\bN$ and
 $\mu_f$ is the normalized 
Lebesgue measure 
 $m_{\partial\bD}$ on the unit circle $\partial\bD=\partial K(f)$.
 For every $a=re^{i\theta}$ ($r>0,\theta\in\bR$),
 every $C^1$-test function $\phi$ on $\bP^1$, and every $n\in\bN$, we have
 $\bigl|\int_{\bP^1}\phi\rd(((f^n)')^*\delta_a
 -\sum_{j=1}^{d^n-1}\delta_{e^{i(\theta+j\cdot 2\pi)/(d^n-1)}})/(d^n-1)\bigr|
 \le\|\phi\|_{C^1}\cdot\bigl|e^{(\log(rd^{-n}))/(d^n-1)}-1\bigr|
 \le\|\phi\|_{C^1}\cdot Cnd^{-n}$ for some $C>0$ independent of both $\phi$ and $n$,
 and if $\phi$ is $C^2$, 
 then by the {\itshape midpoint method} in numerically computing definite integrals,
 we also have $\bigl|\int_{\bP^1}\phi
 \rd\bigl(\sum_{j=1}^{d^n-1}\delta_{e^{i(\theta+j\cdot 2\pi)/(d^n-1)}}/(d^n-1)-m_{\partial\bD}\bigr)\bigr|\le\|\phi\|_{C^2}\cdot C'd^{-n}$ 
 for some $C'>0$ independent of both $\phi$ and $n$.
\end{remark}

Finally, let us focus on the (monic and centered) {\itshape unicritical polynomials family}
\begin{gather}
 f:\bC\times\bP^1\ni(\lambda,z)\mapsto z^d+\lambda=:f_\lambda(z)\in\bP^1\label{eq:unicritical} 
\end{gather}
of degree $d>1$. 
The parameter space analog of Theorem \ref{th:derivative} 
for the sequence $((f_\lambda^n)'(\lambda))$
in $\bC[\lambda]$ of the derivative of $f_\lambda^n$
at its unique critical value $z=\lambda$ in $\bC$ 
is also obtained by Gauthier--Vigny \cite[Theorem 3.7]{GV16derivative}.
We will also establish a parameter space analog of Theorem \ref{th:quantitative}.

\begin{mainth}\label{th:unicrit}
 Let $f$ be the monic and centered 
 unicritical polynomials family of degree $d>1$ defined as in \eqref{eq:unicritical}. 
 Then for every $\eta>1$, there is a polar subset $E=E_{f,\eta}$ in $\bC$ such that
 for every $a\in\bC\setminus E$ and every $C^2$-test function $\phi$ on $\bP^1$,
\begin{gather*}
 \int_{\bP^1}\phi(\lambda)\rd\left(\frac{((f_\lambda^n)'(\lambda))^*\delta_a}{d^n-1}-\mu_{C_d}\right)(\lambda)=O((\eta/d)^n)
\end{gather*}
 as $n\to\infty$. Here $C_d$ is the connectedness locus of the family $f$
 in the parameter space $\bC$ and $\mu_{C_d}$ is the harmonic measure on $C_d$
 with pole $\infty$.
\end{mainth}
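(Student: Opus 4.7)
The plan is to apply the same quantitative equidistribution machinery used for Theorem~\ref{th:quantitative}, now to the parameter-space polynomial sequence $p_n(\lambda) := (f_\lambda^n)'(\lambda)$. By the chain rule,
\begin{equation*}
p_n(\lambda) = \prod_{k=0}^{n-1} f_\lambda'(f_\lambda^k(\lambda)) = d^n \prod_{k=0}^{n-1} (f_\lambda^k(\lambda))^{d-1},
\end{equation*}
so, writing $g_k(\lambda) := f_\lambda^k(\lambda)$ (a polynomial in $\lambda$ of degree $d^k$, as follows by induction from $g_0(\lambda)=\lambda$ and $g_{k+1}=g_k^d+\lambda$), we see that $p_n$ is a polynomial in $\lambda$ of degree $(d-1)\sum_{k=0}^{n-1}d^k=d^n-1$, matching the normalization in the statement.

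The first step is to control the normalized potentials
\begin{equation*}
u_n(\lambda) := \frac{\log|p_n(\lambda)|}{d^n-1} = \frac{n\log d}{d^n-1} + \sum_{k=0}^{n-1}\frac{(d-1)d^k}{d^n-1}\cdot\frac{\log|g_k(\lambda)|}{d^k}.
\end{equation*}
The coefficients $(d-1)d^k/(d^n-1)$ form a probability vector concentrated near $k=n-1$, and $d^{-k}\log^+|g_k(\lambda)|\to G_{C_d}(\lambda)$ locally uniformly on $\bC$, where $G_{C_d}$ is the Green function of $C_d$ with pole at $\infty$, satisfying $dd^c G_{C_d}=\mu_{C_d}$. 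A standard telescoping argument for polynomial dynamics gives $|d^{-k}\log|g_k(\lambda)|-G_{C_d}(\lambda)|=O(d^{-k})$ locally uniformly outside the zero loci of the $g_k$, and from this I would deduce an exponential rate for the convergence $u_n\to G_{C_d}$ in an $L^1_{\loc}$ sense strong enough for the equidistribution machinery.

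The second step is to feed this input into the Russakovskii--Shiffman \cite{RS97} quantitative equidistribution theorem, in the refined form of Drasin--Okuyama \cite{DOproximity} already used for Theorem~\ref{th:quantitative}: given a sequence of polynomials $(P_n)$ of degrees $d_n\to\infty$ whose normalized log moduli tend to a target subharmonic $u$ with exponential rate, the theorem outputs, off a polar subset $E=E_{f,\eta}$ of $\bC$, an exponential rate $O((\eta/d)^n)$ for $\int\phi\,\rd(d_n^{-1}P_n^*\delta_a-dd^c u)$ against $C^2$-test functions $\phi$, for every $a\in\bC\setminus E$. Applied with $P_n=p_n$, $d_n=d^n-1$, and $u=G_{C_d}$, this is exactly the conclusion of Theorem~\ref{th:unicrit}; the $\eta>1$-slack absorbs the loss when passing from an $L^1$ potential bound to a pointwise weak-$*$ estimate.

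The main obstacle I anticipate lies in Step 1, specifically the behavior near $\partial C_d$: inside $C_d$ the $g_k$ are uniformly bounded, and outside $C_d$ the telescoping bound $O(d^{-k})$ is locally uniform away from $\partial C_d$, but along the boundary PCF parameters accumulate and $g_k(\lambda)$ may be arbitrarily small, so one must upgrade the potential convergence to an estimate that is strong enough---globally, modulo a polar set---to drive the Russakovskii--Shiffman machinery and produce the claimed rate $O((\eta/d)^n)$. Once Step 1 is carried out with the requisite uniformity, Step 2 should follow the template of Theorem~\ref{th:quantitative} almost verbatim.
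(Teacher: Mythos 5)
Your overall architecture is the same as the paper's: use the chain rule to write $\log|(f_\lambda^n)'(\lambda)|=n\log d+(d-1)\sum_{k=0}^{n-1}\log|f_\lambda^{k+1}(0)|$, prove an $L^1(\bP^1,\omega)$ rate for the convergence of the normalized potentials to $g_{H_\infty}$, and handle the proximity to the value $a$ via the Russakovskii--Shiffman Valiron-exceptional-set argument (\cite[Proposition 6.2]{RS97}), which is what produces the polar set $E_{f,\eta}$ and consumes the $\eta>1$ slack. But there is a genuine gap exactly where you flag ``the main obstacle,'' and you do not close it. The telescoping bound $|d^{-k}\log|f_\lambda^k(\lambda)|-g_{H_\infty}(\lambda)|=O(d^{-k})$ holds only locally uniformly on $H_\infty$; on and near $\partial C_d$ the critical orbit $f_\lambda^{k+1}(0)$ can be arbitrarily small --- it vanishes precisely at the superattracting parameters, which accumulate on all of $\partial C_d$ --- so the term $\int_{\bP^1}\max\{0,-\log|f_\lambda^{k+1}(0)|\}\,\rd\omega(\lambda)$ requires a separate, genuinely quantitative input. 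The paper supplies it by quoting Gauthier--Vigny \cite{GV15}: $\int_{\bP^1}\bigl|\log|f_\lambda^{n+1}(0)|-d^n g_{H_\infty}(\lambda)\bigr|\rd\omega(\lambda)=O(n)$, which, summed against the weights $(d-1)/(d^n-1)$, yields the $O(n^2d^{-n})$ bound of Lemma \ref{th:FSparam}. Without this (or an equivalent estimate) your Step 1 only gives $L^1_{\loc}$ control away from $C_d$, which is not enough for the final integration by parts against $\rd\rd^c\phi$ over all of $\bP^1$.

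Two smaller inaccuracies: first, \cite[Proposition 6.2]{RS97} does not take ``potentials converging with exponential rate'' as input and output the full weak-$*$ rate; it only shows that the set of $a$ with $\int_{\bP^1}\log(1/[(f_\lambda^n)'(\lambda),a])\,\rd\omega(\lambda)\neq o(\eta^n)$ is polar, and this must be combined with the separate $L^1$ estimate for the $\infty$-component of the potential. Second, to convert between $\log|(f_\lambda^n)'(\lambda)|$ and $\log(1/[(f_\lambda^n)'(\lambda),\infty])$ in $L^1$ one needs a pointwise upper bound on the normalized potential; the paper gets this from Buff's inequality (Theorem \ref{th:deBranges}) in Lemma \ref{th:upperunicrit}, a step absent from your sketch. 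Both are fixable, but the Gauthier--Vigny input is the essential missing ingredient.
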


The proof of Theorem \ref{th:unicrit}
is based on Russakovskii--Shiffman \cite{RS97} mentioned above,
and on a quantitative equidistribution of
superattracting parameters by Gauthier--Vigny \cite{GV15}.

In Section \ref{sec:background}, we recall a background from 
complex dynamics. 
In Sections \ref{sec:derivative}, \ref{sec:quantitative}, 
and \ref{sec:unicrit},
we show Theorems \ref{th:derivative}, \ref{th:quantitative}, 
and \ref{th:unicrit}, respectively. 

\begin{notation}\label{th:notation}
We adopt the convention $\bN=\bZ_{>0}$. For every $a\in\bC$ and every
$r>0$, set $\bD(a,r):=\{z\in\bC:|z-a|<r\}$.
Let $\delta_z$ be the Dirac measure on $\bP^1$ at each $z\in\bP^1$.
Let $[z,w]$ be the chordal metric on $\bP^1$
normalized as $[\cdot,\infty]=1/\sqrt{1+|\cdot|^2}$ on $\bP^1$
(following the notation in Nevanlinna's and Tsuji's books \cite{Nevan70,Tsuji59}).
Let $\omega$ be the Fubini-Study area element on $\bP^1$ normalized as
$\omega(\bP^1)=1$.
The Laplacian $\rd\rd^c$ on $\bP^1$ is normalized as
$\rd\rd^c(-\log[\cdot,\infty])=\omega-\delta_\infty$ on $\bP^1$.
\end{notation}

\section{Background}\label{sec:background}

\subsection{Dynamics of rational functions}
Let $f\in\bC(z)$ be of degree $d>1$. 
Let $C(f)$ be the critical set of $f$.
The Julia and Fatou sets of $f$ are defined by
$J(f):=\{z\in\bP^1:\text{the family }(f^n:\bP^1\to\bP^1)_{n\in\bN}\text{ is not normal at }z\}$ and $F(f):=\bP^1\setminus J(f)$, respectively.
A component of $F(f)$ is called a {\itshape Fatou component} of $f$. 
A Fatou component $U$ of $f$ is mapped by $f$ properly onto a Fatou component of $f$.
A Fatou component $U$ of $f$ is said to be {\itshape cyclic} 
if there is $n\in\bN$ such that $f^n(U)=U$. 
For more details on complex dynamics, see e.g.\ Milnor's book \cite{Milnor3rd}.

The $f$-{\itshape equilibrium $($or canonical$)$ measure} $\mu_f$
on $\bP^1$ is the unique probability Radon measure $\nu$ on $\bP^1$
such that 
\begin{gather}
 f^*\nu=d\cdot\nu\quad\text{on }\bP^1\label{eq:balanced} 
\end{gather}
and that $\nu(\{a\})=0$ for every $a\in E(f)$;
the {\itshape exceptional set} of $f$ is
$E(f):=\{a\in\bP^1:\#\bigcup_{n\in\bN}f^{-n}(a)<\infty\}
=\{a\in\bP^1:f^{-2}(a)=\{a\}\}$.
Then in fact $\supp\mu_f=J(f)$, and
for every $n\in\bN$, $\mu_{f^n}=\mu_f$ on $\bP^1$. 
For more details, see Brolin \cite{Brolin},
Lyubich \cite{Lyubich83}, Freire--Lopes--Ma\~n\'e \cite{FLM83}.

\subsection{Dynamics of polynomials}\label{sec:polynomial}
Let $f\in\bC[z]$ be of degree $d>1$. 
We note that $\infty\in E(f)$, $\#(C(f)\cap\bC)\le d-1$, and 
$C(f)\cap\bC=(\supp\rd\rd^c\log|f'|)\cap\bC$.

The filled-in Julia set $K(f)$ of $f$ is defined by
\begin{gather*}
 K(f):=\{z\in\bC:\limsup_{n\to\infty}|f^n(z)|<\infty\},
\end{gather*}
whose complement in $\bP^1$ coincides with the immediate superattractive basin
\begin{gather*}
 I_\infty(f):=\{z\in\bP^1:\lim_{n\to\infty}f^n(z)=\infty\}
\end{gather*}
of the superattracting fixed point $\infty$ of $f$; in particular,
$\lim_{n\to\infty}f^n=\infty$ locally uniformly on $I_\infty(f)$, and
$K(f)$ is a compact subset in $\bC$. We note that
$F(f)=I_\infty(f)\cup\Int K(f)$ and that $J(f)=\partial K(f)$.

By a standard telescope argument, there exists the locally uniform limit
\begin{gather*}
 g_f:=\lim_{n\to\infty}\frac{-\log[f^n(\cdot),\infty]}{d^n}
\end{gather*}
on $\bC$. Setting $g_f(\infty):=+\infty$, we have
$g_f\circ f=d\cdot g_f$ on $\bP^1$, and
for every $n\in\bN$, we also have $g_{f^n}=g_f$ on $\bP^1$.
The restriction of $g_f$ to $I_\infty(f)$ coincides with the 
Green function on $I_\infty(f)$ with pole $\infty$,
and the measure 
\begin{gather*}
 \mu_{K(f)}:=\rd\rd^c g_f+\delta_\infty\quad\text{on }\bP^1
\end{gather*}
coincides with the harmonic measure on $K(f)$ with pole $\infty$.
In particular, 
$\supp\mu_{K(f)}\subset\partial K(f)$, and in fact $\mu_{K(f)}=\mu_f$ on $\bP^1$.
The function $z\mapsto g_f(z)-\log|z|$ extends harmonically
to an open neighborhood of $\infty$ in $I_\infty(f)$
so the function $z\mapsto-\log[z,\infty]-g_f(z)$ 
extends continuously to $\bP^1$.

The following is substantially shown in Buff \cite[the proof of Theorem 4]{Buff03}.

\begin{theorem}[Buff]\label{th:deBranges}
Let $f\in\bC[z]$ be of degree $d>1$, and let $z_0\in\bC$.
If $g_f(z_0)\ge\max_{c\in C(f)\cap\bC}g_f(c)$,
then $|f'(z_0)|\le d^2\cdot e^{(d-1)g_f(z_0)}$, and
the equality {\itshape never} holds if 
$(C(f)\cap\bC)\cap I_\infty(f)\neq\emptyset$.
\end{theorem}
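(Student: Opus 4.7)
My plan is to follow Buff's approach using Bieberbach's theorem (the $n=2$ case of de Branges) applied to a univalent function built from the Böttcher coordinate of $f$. First, by conjugating $f$ with a dilation $z\mapsto\lambda z$, I may assume without loss of generality that $f$ is monic; this preserves $g_f$ and the critical set, and in particular the quantity $M:=\max_{c\in C(f)\cap\bC}g_f(c)$. On $\Omega_M:=\{g_f>M\}\subset\bC$ there are no critical points of $f$, so the Böttcher coordinate extends to a conformal isomorphism $\varphi:\Omega_M\cup\{\infty\}\to\{|w|>e^M\}\cup\{\infty\}$ with $\varphi\circ f=\varphi^d$ and $\varphi(z)/z\to 1$ as $z\to\infty$. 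Differentiating $\varphi\circ f=\varphi^d$ yields
\[
|f'(z_0)| = d\cdot e^{(d-1)g_f(z_0)}\cdot\frac{|\psi'(w_0^d)|}{|\psi'(w_0)|},
\]
where $\psi:=\varphi^{-1}$ and $w_0:=\varphi(z_0)$, so the theorem reduces to proving the universal estimate $|\psi'(w_0^d)|/|\psi'(w_0)|\le d$ for all $|w_0|\ge e^M$.

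To produce this estimate I would introduce $h:\bD\to\bC$ defined by $h(0):=0$ and $h(\zeta):=e^M/\psi(e^M/\zeta)$ for $\zeta\ne 0$. A direct expansion at the origin using $\psi(w)=w+O(1)$ at infinity shows that $h$ is univalent on $\bD$ with $h'(0)=1$, so Bieberbach's inequality $|h''(0)/2|\le 2$ (and more generally de Branges's $|h^{(n)}(0)/n!|\le n$) applies. The substitution $\zeta=e^M/w$ gives $\psi'(w)=\zeta^2 h'(\zeta)/h(\zeta)^2$, which converts the desired inequality $|\psi'(w_0^d)|/|\psi'(w_0)|\le d$ into a universal estimate on $h$ evaluated at the two points $\zeta_0=e^M/w_0$ and $\xi_0=e^M/w_0^d=e^{(1-d)M}\zeta_0^d$, both lying strictly inside $\bD$ when $g_f(z_0)>M$. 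The boundary case $g_f(z_0)=M$ will then follow by continuity, and is trivially true if $z_0\in C(f)$ since then $f'(z_0)=0$.

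For the strict inequality I would exploit the rigidity of the Bieberbach extremal: equality forces $h$ to be a rotation of the Koebe function $\zeta/(1-\zeta)^2$, which, traced back through $\psi(w)=e^M/h(e^M/w)$, pins down $f$ to a very restricted, Chebyshev-like class whose filled Julia set $K(f)$ is connected and whose critical points all lie in $K(f)$. Under the hypothesis $(C(f)\cap\bC)\cap I_\infty(f)\ne\emptyset$, some critical point escapes, $K(f)$ is disconnected, and this extremal configuration is ruled out, yielding strict inequality. The principal obstacle is the Bieberbach step itself: classical Koebe distortion estimates on $|h'|$ and $|h|$ applied at $\zeta_0$ alone blow up as $|\zeta_0|\to 1$ (that is, $g_f(z_0)\to M$), so one cannot simply plug in pointwise Koebe bounds. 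Instead, the algebraic coupling $\xi_0=e^{(1-d)M}\zeta_0^d$ between the two evaluation points must be used to cancel the divergences and extract precisely the factor $d$, which combines with the $d$ already present in the Böttcher identity to give the sharp constant $d^2$, matching the extremum attained by the monic Chebyshev polynomials on the boundary of their $K(f)$.
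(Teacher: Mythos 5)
The first thing to note is that the paper offers no proof of this statement at all: it defers entirely to Buff's proof of Theorem~4 in \cite{Buff03}, so your argument has to stand on its own. Your reduction is correct and is indeed the standard opening move: after normalizing $f$ to be monic (which transforms $g_f$ by precomposition with the dilation rather than preserving it, a harmless but worth-stating point), the B\"ottcher coordinate $\varphi$ is a conformal isomorphism from $\{g_f>M\}$ onto $\{|w|>e^M\}$ with $M:=\max_{c\in C(f)\cap\bC}g_f(c)$, and differentiating $\varphi\circ f=\varphi^d$ gives
\begin{gather*}
|f'(z_0)|=d\cdot e^{(d-1)g_f(z_0)}\cdot\frac{|\psi'(w_0^d)|}{|\psi'(w_0)|},\qquad \psi:=\varphi^{-1},\ w_0:=\varphi(z_0),
\end{gather*}
so the theorem is equivalent to the estimate $|\psi'(w_0^d)|\le d\,|\psi'(w_0)|$ for $|w_0|>e^M$.

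That estimate, however, is the \emph{entire content} of Buff's theorem, and your proposal does not prove it. You yourself identify the obstruction: applying the two-sided distortion bounds for $h$ (equivalently, for the class $\Sigma$) separately at $\zeta_0$ and at $\xi_0=e^{(1-d)M}\zeta_0^d$ yields a bound on the ratio that diverges like $(1-|\zeta_0|)^{-2}$ as $g_f(z_0)\to M$, i.e.\ exactly in the regime where the theorem is sharp (equality is attained for Chebyshev polynomials at the endpoints of $K(f)$). Your proposed remedy --- that ``the algebraic coupling between the two evaluation points must be used to cancel the divergences and extract precisely the factor $d$'' --- is a description of what a proof would have to accomplish, not an argument: no inequality simultaneously involving $\zeta_0$ and $\xi_0$ is produced, and it is not shown how the coefficient bounds $|a_n|\le n$ (Buff genuinely needs the full de Branges theorem here, not just $|a_2|\le 2$) would enter. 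The equality analysis is parasitic on this missing step, since one cannot characterize the extremals of an inequality one has not derived; moreover, Bieberbach rigidity identifies the Koebe function as the extremal of a \emph{pointwise coefficient} bound, whereas your (unproven) inequality is evaluated at interior points, so the rigidity transfer would itself require justification. Two smaller gaps: when $M=0$ the hypothesis $g_f(z_0)\ge M$ admits $z_0\in\Int K(f)$, which is not a limit of points of $\{g_f>M\}$, so ``by continuity'' must be supplemented by the maximum modulus principle for $f'$ on $K(f)$; and the continuity argument at $\{g_f=M\}$ with $M>0$ should invoke the fact that $g_f$ is nonconstant harmonic there, so that such points are accessible from $\{g_f>M\}$.
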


For more details on polynomial dynamics and
potential theory, see Brolin 
\cite[Chapter III]{Brolin}, and also Ransford's book \cite{Ransford95}.

\section{Proof of Theorem \ref{th:derivative}}
\label{sec:derivative}

Let $f\in\bC[z]$ be of degree $d>1$. 
For every $a\in\bC$ and every $n\in\bN$,
the functions $(\log|(f^n)'-a|)/(d^n-1)-g_f$ 
and $(\log\max\{1,|(f^n)'|\})/(d^n-1)-g_f$
extend continuously to $\bP^1$.
Set $a_d=a_d(f):=\lim_{n\to\infty}f(z)/z^d\in\bC\setminus\{0\}$.

\begin{remark}\label{th:affine}
Since the question is affine invariant, we could assume
$|a_d|=1$ without loss of generality, by replacing $f$ with $c^{-1}\circ f\circ c$
for such $c\in\bC\setminus\{0\}$ that $c^{d-1}=a_d^{-1}$ if necessary 
(for every $c\in\bC\setminus\{0\}$, $z\mapsto c\cdot z$ is also denoted by $c$).
In this article, we would not normalize $f$ as $|a_d|=1$ in order to
make it explicit which computations would be independent of such a normalization.
\end{remark}

\begin{lemma}\label{th:critical}
On $I_\infty(f)\setminus\bigcup_{n\in\bN\cup\{0\}}f^{-n}(C(f)\cap\bC)$,
\begin{gather*}
\lim_{n\to\infty}\left(\frac{\log|(f^n)'|}{d^n-1}-g_f\right)=0
\end{gather*}
locally uniformly.
\end{lemma}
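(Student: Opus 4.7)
The plan is to differentiate the identity $(f^n)'(z)=\prod_{k=0}^{n-1}f'(f^k(z))$ into a sum via $\log$, replace each summand $\log|f'(w)|$ by its leading behaviour $(d-1)g_f(w)+\log d$ valid near $\infty$, and then collapse the sum using the functional equation $g_f\circ f^k=d^k g_f$ together with the geometric identity $\sum_{k=0}^{n-1}(d-1)d^k=d^n-1$.

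First I would introduce the auxiliary function $h(w):=\log|f'(w)|-(d-1)g_f(w)-\log d$ on $\bC$. Writing $f(z)=a_dz^d+\cdots$ one obtains the expansion $\log|f'(w)|=(d-1)\log|w|+\log(d|a_d|)+o(1)$ as $w\to\infty$, while the fact recorded in Section \ref{sec:polynomial} that $-\log[\cdot,\infty]-g_f$ extends continuously to $\bP^1$ yields $g_f(w)=\log|w|+\log|a_d|/(d-1)+o(1)$ as $w\to\infty$. Subtracting, $h(w)\to 0$ as $w\to\infty$; moreover $h$ is continuous on $\bC\setminus(C(f)\cap\bC)$.

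The chain rule and $g_f\circ f^k=d^k g_f$ now give the exact identity
\[
\log|(f^n)'(z)|=\sum_{k=0}^{n-1}\log|f'(f^k(z))|=(d^n-1)\,g_f(z)+n\log d+\sum_{k=0}^{n-1}h(f^k(z)),
\]
so the quantity I want to control equals $\frac{n\log d}{d^n-1}+\frac{1}{d^n-1}\sum_{k=0}^{n-1}h(f^k(z))$. Fix a compact subset $K$ of $I_\infty(f)\setminus\bigcup_{n\in\bN\cup\{0\}}f^{-n}(C(f)\cap\bC)$. Then $g_f\ge\alpha>0$ on $K$, so $g_f\circ f^k=d^k g_f\to\infty$ uniformly on $K$, i.e.\ $f^k\to\infty$ uniformly on $K$. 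Therefore given $\varepsilon>0$ there is $K_0$ with $|h\circ f^k|<\varepsilon$ on $K$ for every $k\ge K_0$. For $0\le k<K_0$ the hypothesis on $K$ ensures $f^k(K)\cap C(f)=\emptyset$, so $h\circ f^k$ is continuous on $K$ and bounded there by some $M=M(K,K_0)$.

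Hence $\bigl|\sum_{k=0}^{n-1}h(f^k(z))\bigr|\le K_0M+(n-K_0)\varepsilon$ on $K$, and since $n/d^n\to 0$ this gives the desired locally uniform convergence to $0$. The only mildly delicate step is the uniform vanishing of $h$ at $\infty$, a direct computation from the leading coefficient; everything else is bookkeeping of the two regimes $k<K_0$ versus $k\ge K_0$, made possible by the uniform escape of iterates which itself follows from $g_f\circ f=d\cdot g_f$.
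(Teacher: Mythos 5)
Your proof is correct and is essentially the paper's argument in different packaging: the paper's identity \eqref{eq:difference} is precisely your chain-rule decomposition $\frac{\log|(f^n)'|}{d^n-1}-g_f=\frac{n\log d}{d^n-1}+\frac{1}{d^n-1}\sum_{k=0}^{n-1}h\circ f^k$ with the error term $h=\log|f'|-(d-1)g_f-\log d$ written out via the logarithmic potential of $\rd\rd^c\log|f'|$ and the chordal metric, and both proofs then conclude from the uniform escape of the orbit to $\infty$ avoiding $C(f)\cap\bC$, which makes the sum $O(n)$. (Note your argument does not even need the normalization $h(\infty)=0$ — whose verification that the Robin constant equals $\log|a_d|/(d-1)$ uses $g_f\circ f=d\cdot g_f$ — since boundedness of $h$ along the orbit already gives $O(n)/(d^n-1)\to 0$.)
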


\begin{proof}
For every $n\in\bN$ and every $z\in\bC$, by a direct calculation, we have
\begin{multline}
 \frac{\log|(f^n)'(z)|}{d^n-1}-\frac{\log\bigl|d^n\cdot a_d^{(d^n-1)/(d-1)}\bigr|}{d^n-1}\\
=\frac{1}{d^n-1}\int_{\bC}\log|z-u|(\rd\rd^c\log|(f^n)'|)(u)\\
=\frac{1}{d^n-1}\int_{\bC}\sum_{j=0}^{n-1}\left(\int_{\bC}\log|z-\cdot|\rd((f^j)^*\delta_w)\right)(\rd\rd^c\log|f'|)(w)\\
=\frac{1}{d^n-1}\int_{\bC}\sum_{j=0}^{n-1}\left(\log|f^j(z)-w|-\log|a_d|^{(d^j-1)/(d-1)}\right)(\rd\rd^c\log|f'|)(w)\\
=\frac{1}{d^n-1}\int_{\bC}\sum_{j=0}^{n-1}(\log[f^j(z),w]-\log[f^j(z),\infty]-\log[w,\infty])(\rd\rd^c\log|f'|)(w)\\
-\log|a_d|^{\frac{1}{d-1}-\frac{n}{d^n-1}}.\label{eq:equality}
\end{multline} 
Then noting that $g_f\circ f=d\cdot g_f$ on $\bP^1$, 
for every $n\in\bN$ and every $z\in\bP^1$, we have
\begin{multline}
\frac{\log|(f^n)'(z)|}{d^n-1}-g_f(z)\\
=\frac{1}{d^n-1}\int_{\bC}\Biggl(\sum_{j=0}^{n-1}\log[f^j(z),w]\Biggr)(\rd\rd^c\log|f'|)(w)\\
+\frac{d-1}{d^n-1}\sum_{j=0}^{n-1}\left(-\log[f^j(z),\infty]-g_f(f^j(z))\right)\\
+\left(-\int_{\bC}\log[w,\infty](\rd\rd^c\log|f'|)(w)
+\log d+\log|a_d|\right)\frac{n}{d^n-1},\label{eq:difference}
\end{multline}
which with 
$\sup_{z\in\bP^1}\left|-\log[z,\infty]-g_f(z)\right|<\infty$
completes the proof.
\end{proof}

\begin{lemma}\label{th:Fubini-Study}
There is $C=C_f>0$ such that 
for every $n\in\bN$ and every $z\in\bP^1$, 
\begin{gather}
\frac{\log\max\{1,|(f^n)'(z)|\}}{d^n-1}-g_f(z)\le\frac{Cn}{d^n-1}.\label{eq:upper}
\end{gather}
\end{lemma}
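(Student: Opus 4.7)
The plan is to read off inequality \eqref{eq:upper} directly from the identity \eqref{eq:difference} established in the proof of Lemma \ref{th:critical}, exploiting the fact that the most delicate term on the right-hand side there has a definite sign. The passage from $|(f^n)'|$ to $\max\{1,|(f^n)'|\}$ is then a short supplementary observation using $g_f\ge 0$.

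Concretely, I would examine the three terms on the right-hand side of \eqref{eq:difference}. First, since $[\cdot,\cdot]\le 1$ on $\bP^1\times\bP^1$, the double integral term is the integral of the nonpositive function $\sum_{j=0}^{n-1}\log[f^j(z),w]$ against the positive Radon measure $\rd\rd^c\log|f'|$ on $\bC$ (which has finite mass $d-1$ and is supported on $C(f)\cap\bC$), hence is $\le 0$ for every $z\in\bP^1$. Second, the continuous extension of $-\log[\cdot,\infty]-g_f$ to $\bP^1$ recorded at the end of Section \ref{sec:polynomial} provides a finite constant $M_1=M_1(f):=\sup_{\bP^1}|{-}\log[\cdot,\infty]-g_f|$, so the second sum in \eqref{eq:difference} is bounded in absolute value by $(d-1)nM_1/(d^n-1)$. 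Third, the coefficient of $n/(d^n-1)$ in the last term of \eqref{eq:difference} is a single constant $C_0=C_0(f)\in\bR$ depending only on $f$. Combining these,
\begin{gather*}
\frac{\log|(f^n)'(z)|}{d^n-1}-g_f(z)\le\frac{((d-1)M_1+|C_0|)\,n}{d^n-1}
\end{gather*}
for every $n\in\bN$ and every $z\in\bC$, which with $C:=(d-1)M_1+|C_0|$ already yields \eqref{eq:upper} wherever $|(f^n)'(z)|\ge 1$.

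To extend \eqref{eq:upper} to all of $\bP^1$, I would use $g_f\ge 0$ on $\bP^1$; this is immediate from $-\log[\cdot,\infty]\ge 0$ on $\bC$ together with the defining locally uniform limit $g_f=\lim_{n\to\infty}-\log[f^n(\cdot),\infty]/d^n$ and $g_f(\infty)=+\infty$. At points $z\in\bC$ with $|(f^n)'(z)|<1$, inequality \eqref{eq:upper} reduces to $-g_f(z)\le Cn/(d^n-1)$, which then follows; the value at $z=\infty$ is handled by the continuous extension of $(\log\max\{1,|(f^n)'|\})/(d^n-1)-g_f$ to $\bP^1$ recalled at the very beginning of Section \ref{sec:derivative}.

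I do not anticipate any real obstacle: the only term of \eqref{eq:difference} lacking a two-sided $O(n/(d^n-1))$ bound is the double integral, and that term is automatically nonpositive thanks to $[\cdot,\cdot]\le 1$. Replacing $|(f^n)'|$ by $\max\{1,|(f^n)'|\}$ merely discards the contribution of $\log|(f^n)'|$ near the zeros of $(f^n)'$, which is exactly what one should drop in order for a uniform one-sided estimate to survive on all of $\bP^1$.
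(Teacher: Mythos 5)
Your proposal is correct and follows essentially the same route as the paper: the paper also reads \eqref{eq:upper} off from \eqref{eq:difference}, discarding the nonpositive double‑integral term (since $[\cdot,\cdot]\le 1$), bounding the remaining two terms by the constant \eqref{eq:nonzero} (which is exactly your $(d-1)M_1+|C_0|$ after a triangle inequality), and then invoking $g_f\ge 0$ to pass to $\log\max\{1,|(f^n)'|\}$ on all of $\bP^1$.
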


\begin{proof}
Set
\begin{multline}
 C=C_f:=(d-1)\cdot\sup_{z\in\bP^1}\left|-\log[z,\infty]-g_f(z)\right|\\
 +(d-1)\cdot\sup_{w\in C(f)\cap\bC}\left|\log[w,\infty]\right|
 +\log d+|\log|a_d||\in\bR_{>0}.\label{eq:nonzero}
\end{multline}
Then for every $n\in\bN$ and every $z\in\bC$,
from \eqref{eq:difference}, we have
$|(f^n)'(z)|\le e^{Cn}\cdot e^{(d^n-1)g_f(z)}$, which with $g_f\ge 0$ on $\bP^1$
completes the proof.
\end{proof}

We note that
$\max_{c\in\bigcup_{n\in\bN\cup\{0\}}f^{-n}(C(f)\cap\bC)}g_f(c)
=\max_{c\in C(f)\cap\bC}g_f(c)<\infty$ by $g_f\circ f=d\cdot g_f$ on $\bP^1$.

\begin{lemma}\label{th:potential}
For every $a\in\bC\setminus\{0\}$, 
\begin{gather*}
\lim_{n\to\infty}\int_{\bP^1}\left|\frac{\log|(f^n)'-a|}{d^n-1}
-g_f\right|\rd\omega=0.
\end{gather*}
\end{lemma}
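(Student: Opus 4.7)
The plan is to derive the $L^1(\omega)$-convergence from (i) a uniform upper bound on $u_n - g_f$ coming from Lemma \ref{th:Fubini-Study}, and (ii) locally uniform pointwise convergence $u_n \to g_f$ on a large subset of $I_\infty(f)$ coming from Lemma \ref{th:critical}, then to invoke the standard compactness of subharmonic functions on $\bC$. Here $u_n := (\log|(f^n)' - a|)/(d^n - 1)$, which is subharmonic on $\bC$.

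For the upper bound, the trivial estimate $|(f^n)' - a| \le (1+|a|)\max\{1,|(f^n)'|\}$ combined with Lemma \ref{th:Fubini-Study} yields
\begin{gather*}
 u_n \le g_f + \epsilon_n \quad \text{on } \bP^1, \quad \epsilon_n := \frac{Cn + \log(1+|a|)}{d^n - 1} \to 0,
\end{gather*}
so $(u_n - g_f)^+ \le \epsilon_n$ tends to zero uniformly and hence $\int_{\bP^1}(u_n - g_f)^+\rd\omega \to 0$.

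For the pointwise convergence on a large set, put $A := I_\infty(f) \setminus \bigcup_{n \in \bN \cup \{0\}} f^{-n}(C(f) \cap \bC)$. Lemma \ref{th:critical} gives $(\log|(f^n)'|)/(d^n - 1) \to g_f$ locally uniformly on $A$, and since $g_f > 0$ on $A$ we have $|(f^n)'| \to \infty$ locally uniformly there; this forces $(\log|1 - a/(f^n)'|)/(d^n - 1) \to 0$ locally uniformly on $A$, whence $u_n \to g_f$ locally uniformly on $A$.

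To conclude, standard subharmonic compactness on $\bC$ shows that every subsequence of $(u_n)$ admits a further $L^1_{\loc}(\bC)$-convergent subsequence with a subharmonic limit $\tilde u$ (the alternative $\tilde u \equiv -\infty$ is excluded by the locally uniform convergence on $A$). One has $\tilde u \le g_f$ by the upper bound and $\tilde u = g_f$ on the full-Lebesgue-measure set $A$ inside the connected open set $I_\infty(f) \cap \bC$ on which $g_f$ is harmonic, so subharmonic uniqueness forces $\tilde u = g_f$ on $I_\infty(f) \cap \bC$; upper semicontinuity of $\tilde u$ then yields $\tilde u = 0 = g_f$ on $\partial K(f) = J(f)$. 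On each Fatou component $U \subset \Int K(f)$, the family $((f^n)'|_U)$ is locally uniformly bounded by normality, and combining the strict maximum principle for the subharmonic $\tilde u \le 0$ with $\tilde u|_{\partial U} = 0$ and pointwise convergence $u_n(z) \to 0$ at a single non-exceptional interior $z \in U$ forces $\tilde u \equiv 0 = g_f$ on $U$. Hence every $L^1_{\loc}$-cluster value equals $g_f$, so $u_n \to g_f$ in $L^1_{\loc}(\bC)$; since $u_n - g_f$ is uniformly bounded in a fixed neighbourhood of $\infty$ (both $u_n$ and $g_f$ having the same leading logarithmic behaviour $\log|z| + \log|a_d|/(d-1)$ there), this upgrades to $L^1(\omega)$-convergence on $\bP^1$.

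The main obstacle is the identification of the subharmonic cluster value $\tilde u$ on $\Int K(f)$, especially in the presence of Siegel disks where $(f^n)'$ remains bounded without converging; the strict maximum principle, combined with normality of $(f^n)|_U$ and pointwise convergence at a single non-exceptional interior point of each Fatou component, is the cleanest way to force $\tilde u = g_f = 0$ throughout $\Int K(f)$.
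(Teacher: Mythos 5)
Your reduction to identifying the $L^1_{\loc}$-cluster values $\tilde u$ of $u_n=(\log|(f^n)'-a|)/(d^n-1)$, the uniform upper bound $(u_n-g_f)^+\le\epsilon_n$ via Lemma \ref{th:Fubini-Study}, and the identification $\tilde u=g_f$ on $I_\infty(f)$ and on $J(f)$ all match the paper's argument. The gap is in the last step, on the bounded Fatou components: you invoke ``pointwise convergence $u_n(z)\to 0$ at a single non-exceptional interior $z\in U$'' as an input, but this is precisely what has to be \emph{proved}, and it is the whole difficulty of the lemma. In an attracting or parabolic basin $(f^n)'(z)\to 0$, so $u_n(z)\to 0$ is immediate; normality of $(f^n|_U)$ only gives local boundedness of $(f^n)'$, i.e.\ $\limsup_n u_n\le 0$, which you already have. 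In a Siegel disk (or a preimage of one), along the relevant subsequence one has $(f^{n_j})'(z_0)=\lambda^{(n_j-n_N)/p}\,(f^{n_N})'(z_0)$ with $\lambda=e^{2\pi i\alpha}$, $\alpha$ irrational, and after passing to a further subsequence $(f^{n_j})'(z_0)$ can converge \emph{exactly to} $a$; the issue is the rate. If $\alpha$ is sufficiently Liouville, $|\lambda^{(n_j-n_N)/p}-\lambda_0|$ could a priori be as small as $e^{-d^{n_j}}$ along a suitable subsequence, in which case $\limsup_j u_{n_j}(z_0)\le -1<0$ and your maximum-principle step collapses.

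The paper rules this scenario out by a genuinely nontrivial argument: it first uses Hartogs's lemma to show that on the putative bad component $U$ the limit $g=\lim_j f^{n_j}$ satisfies $g'\equiv a\ne 0$, forcing $f^{n_N}(U)$ to be a Siegel disk $V$; it then writes $f^{n_j}-g$ in the linearizing coordinate of $V$ and combines a Cauchy estimate for the degree-$d^{n_j}$ polynomial $f^{n_j}-g$ on a disk reaching $\partial U$ with the lower bound $\liminf_j d^{-n_j}\log\sup_{\tilde D}|f^{n_j}-g|\ge g_f(\tilde z)>0$ coming from a point $\tilde z\in I_\infty(f)$ near $\partial U$, to conclude $\liminf_j d^{-n_j}\log|\lambda^{(n_j-n_N)/p}-\lambda_0|\ge 0$. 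Only this quantitative non-recurrence of the rotation, which uses the global polynomial structure and not just local dynamics, yields $\limsup_j u_{n_j}(z_0)\ge 0$ and hence the contradiction. Your proposal names the Siegel-disk obstacle but supplies no mechanism for it, so as written the proof is incomplete.
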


\begin{proof}
Fix $a\in\bC\setminus\{0\}$. The sequence $((\log|(f^n)'-a|)/(d^n-1))$ 
 of subharmonic functions on $\bC$
 is locally uniformly bounded from above on $\bC$; indeed,
 by the chain rule and
$\liminf_{z\to\infty}|f'(z)|=+\infty$,
 for every $R>0$ so large that $\{|z|=R\}\subset I_\infty(f)\setminus\bigcup_{n\in\bN\cup\{0\}}f^{-n}(C(f)\cap\bC)$, we have
 $\liminf_{n\to\infty}\inf_{|z|=R}|(f^n)'(z)|=+\infty$, 
 which with the maximum modulus principle
 yields $\sup_{|z|\le R}|(f^n)'(z)-a|
 \le\sup_{|z|=R}2|(f^n)'(z)|$ for every $n\in\bN$ large enough.
 Then by Lemma \ref{th:Fubini-Study}, we have
$\limsup_{n\to\infty}\sup_{|z|\le R}(\log|(f^n)'-a|)/(d^n-1)
\le\sup_{|z|=R}g_f(z)<\infty$. 
By Lemma \ref{th:critical} and $g_f>0$ on $I_\infty(f)$,
for every compact subset $C$ in $I_\infty(f)\setminus\bigcup_{n\in\bN\cup\{0\}}f^{-n}(C(f)\cap\bC)$, we also have 
$1/2\le|((f^n)'-a)/(f^n)'|\le 2$ on $C$ for every $n\in\bN$ large enough,
so in particular
 \begin{gather}
 \lim_{n\to\infty}\left(\frac{\log|(f^n)'-a|}{d^n-1}-g_f\right)
 =\lim_{n\to\infty}\left(\frac{\log|(f^n)'|}{d^n-1}-g_f\right)=0\label{eq:uniform}
 \end{gather}
locally uniformly
on $I_\infty(f)\setminus\bigcup_{n\in\bN\cup\{0\}}f^{-n}(C(f)\cap\bC)$.

 Let $m_2$ be the Lebesgue measure on $\bC$.
 By a {\itshape compactness principle} for a locally uniformly
 upper bounded sequence of subharmonic functions on a domain in $\bR^m$
 which is not locally uniformly convergent to $-\infty$
 (see Azarin \cite[Theorem 1.1.1]{Azarin80}, 
 H\"olmander's book \cite[Theorem 4.1.9(a)]{Hormander83}),
 we can choose a sequence $(n_j)$ in $\bN$ tending to $+\infty$
 as $j\to\infty$ such that the $L^1_{\loc}(\bC,m_2)$-limit
 $\phi:=\lim_{j\to\infty}(\log|(f^{n_j})'-a|)/(d^{n_j}-1)$
exists and is subharmonic on $\bC$. 
Choosing a subsequence of $(n_j)$ if necessary,
we have $\phi=\lim_{j\to\infty}(\log|(f^{n_j})'-a|)/(d^{n_j}-1)$ 
Lebesgue a.e.\ on $\bC$. Then by \eqref{eq:uniform}, we have
$\phi\equiv g_f$ Lebesgue a.e.\ on 
$\bC\setminus(K(f)\cup\bigcup_{n\in\bN\cup\{0\}}f^{-n}(C(f)\cap\bC))$,
and in turn on $\bC\setminus K(f)$ by the subharmonicity of $\phi$
and the harmonicity of $g_f$ there.
Let us show that $\phi=g_f$ Lebesgue a.e.\ on the whole $\bC$, and then
$\lim_{n\to\infty}(\log|(f^n)'-a|)/(d^n-1)=g_f$ in $L^1_{\loc}(\bC,m_2)$,
which with the locally uniform convergence \eqref{eq:uniform} 
will complete the proof since
$\max_{c\in\bigcup_{n\in\bN\cup\{0\}}f^{-n}(C(f)\cap\bC)}g_f(c)
<\infty$ and the Radon-Nikodim derivative
$\rd\omega/\rd m_2$ is continuous so locally bounded on $\bC$.

By $\log(1/[w,\infty])-\log\max\{1,|w|\})\le\log\sqrt{2}$ on $\bC$
and Lemma \ref{th:Fubini-Study}, for every $n\in\bN$, we have
\begin{multline*}
\frac{\log|(f^n)'-a|}{d^n-1}-g_f\\
=\frac{\log[(f^n)',a]}{d^n-1}
+\left(\frac{\log(1/[(f^n)',\infty])}{d^n-1}-g_f\right)
+\frac{\log(1/[a,\infty])}{d^n-1}\\
\le\frac{C_f\cdot n}{d^n-1}+\frac{\log\sqrt{2}+\log(1/[a,\infty])}{d^n-1} 
\end{multline*}
on $\bC$, so $\phi\le g_f$ Lebesgue a.e.\ on $\bC$ and in turn on $\bC$
by the subharmonicity of $\phi$ and the continuity of $g_f$ on $\bC$.
Hence $\phi-g_f$ is $\le 0$ and is upper semicontinuous on $\bC$.

Now suppose to the contrary that the open subset
$\{z\in\bC:\phi(z)<g_f(z)\}$ in $\bC$ is non-empty.
Then by $\phi\equiv g_f$ on $\bC\setminus K(f)$,
there is a bounded Fatou component $U$ of $f$ containing a
component $W$ of $\{z\in\bC:\phi(z)<g_f(z)\}$.
Since $\phi\le g_f=0$ on $U\subset K(f)$, 
by the maximum principle for subharmonic functions,
we in fact have $U=W$. 

Taking a subsequence of $(n_j)$
if necessary, we can assume that $(f^{n_j}|U)$ is locally uniformly convergent
to a holomorphic function $g$ on $U$ as $j\to\infty$
without loss of generality. We claim that $g'\equiv a$ on $U$,
so we can say $g\in\bC[z]$;
indeed, fixing a domain $D\Subset U=W$,
by a version of Hartogs's lemma on subharmonic functions
(see H\"olmander's book \cite[Theorem 4.1.9(b)]{Hormander83}) and
the upper semicontinuity of $\phi$, we have
$\limsup_{n\to\infty}\sup_{\overline{D}}(\log|(f^{n_j})'-a|)/(d^{n_j}-1)
\le\sup_{\overline{D}}\phi<0$. Hence
$g'=(\lim_{j\to\infty}f^{n_j})'=\lim_{j\to\infty}(f^{n_j})'\equiv a$ on $D$, 
so $g'\equiv a$ on $U$ by the identity theorem for holomorphic functions. 

Hence, under the assumption that $a\neq 0$, 
the locally uniform limit $g$ on $U$ is non-constant. So by Hurwitz's theorem
and the classification of cyclic Fatou components, 
there is $N\in\bN$ such that
$V:=f^{n_N}(U)=g(U)(\supset g(\overline{D}))$
is a Siegel disk of $f$ and, setting $p:=\min\{n\in\bN:f^n(V)=V\}$,
that $p|(n_j-n_N)$ for every $j\ge N$. 
We can fix a holomorphic injection $h:V\to\bC$ such that 
for some $\alpha\in\bR\setminus\bQ$, setting $\lambda:=e^{2i\pi\alpha}$, we have
$h\circ f^p=\lambda\cdot h$ on $V$, so for every $j\ge N$,
$h\circ f^{n_j}=\lambda^{(n_j-n_N)/p}\cdot(h\circ f^{n_N})$
on $U$. Then taking a subsequence of $(n_j)$ 
if necessary, there also exists the limit 
\begin{gather*}
 \lambda_0:=\lim_{j\to\infty}\lambda^{(n_j-n_N)/p}
\end{gather*}
in $\partial\bD$,
so that $h\circ g=\lim_{j\to\infty} h\circ f^{n_j}=\lambda_0\cdot(h\circ f^{n_N})$
on $U$. In particular,
\begin{gather}
 h\circ f^{n_j}-h\circ g=(\lambda^{(n_j-n_N)/p}-\lambda_0)\cdot(h\circ f^{n_N})\label{eq:rotation}
\end{gather}
on $U$. Set $w_0:=h^{-1}(0)\in V$, so that $f^p(w_0)=w_0$, and
fix $z_0\in f^{-n_N}(w_0)\cap U$, so that $f^{n_j}(z_0)=w_0$ for every $j\ge N$
and $g(z_0)=\lim_{j\to\infty}f^{n_j}(z_0)=w_0$. 

We claim that
\begin{gather}
 \frac{\log|(f^{n_j})'(z_0)-a|}{d^{n_j}-1}
=\frac{\log|\lambda^{(n_j-n_N)/p}-\lambda_0|}{d^{n_j}-1}+O(d^{-n_j})\label{eq:compare} 
\end{gather}
as $j\to\infty$;
for, by the chain rule applied to both sides in \eqref{eq:rotation}
and $h'(w_0)\neq 0$ (and $g'(z_0)=a$), we have
\begin{gather*}
 (f^{n_j})'(z_0)-a=(\lambda^{(n_j-n_N)/p}-\lambda_0)\cdot(f^{n_N})'(z_0),
\tag{\ref{eq:rotation}$'$}
\end{gather*}
which also yields $(f^{n_N})'(z_0)\neq 0$ by
$(f^{n_j})'(z_0)=(f^{n_j-n_N})'(w_0)\cdot(f^{n_N})'(z_0)$
and the assumption $a\neq 0$. We also claim that 
\begin{gather}
 \liminf_{j\to\infty}\frac{1}{d^{n_j}}\log|\lambda^{(n_j-n_N)/p}-\lambda_0|\ge 0\label{eq:nonlinearity}
\end{gather}
(cf.\ \cite[Proof of Theorem 3]{OkuNonlinear}); indeed,
for every domain $D\Subset U\setminus f^{-n_N}(w_0)$, since 
$h^{-1}$ is Lipschitz continuous on
$h(\bigcup_{n\in\bN}(f^p)^n(f^{n_N}(D)))\cup g(D))\Subset h(V)$
and $\sup_D|h\circ f^{n_N}|>0$, from \eqref{eq:rotation}, we observe that
\begin{gather}
\frac{1}{d^{n_j}}\sup_{D}\log|f^{n_j}-g|
\le\frac{1}{d^{n_j}}\log|\lambda^{(n_j-n_N)/p}-\lambda_0|+O(d^{-n_j})
\tag{*}\label{eq:recurrence}
\end{gather}
as $j\to\infty$.
On the other hand, for every domain $\tilde{D}$ intersecting $\partial U$ in $\bC$, 
fixing $\tilde{z}\in\tilde{D}\cap I_\infty(f)\neq\emptyset$, we observe that
\begin{gather}
\liminf_{j\to\infty}\frac{1}{d^{n_j}}\sup_{\tilde{D}}\log|f^{n_j}-g|\ge g_f(\tilde{z})>0.
\tag{**}\label{eq:nonequi}
\end{gather}
Now fix $z_1\in U$ and $z'\in\partial U$ 
such that $\bD(z_1,|z'-z_1|)\subset U\setminus f^{-n_N}(w_0)$. Then
for every $\epsilon\in(0,|z'-z_1|)$, 
using Cauchy's estimate applied to $f^{n_j}-g\in\bC[z]$ around $z_1$, we have
\begin{multline*}
 |f^{n_j}-g|
\le\sum_{k=0}^{d^{n_j}}
\frac{\sup_{\partial\bD(z_1,|z'-z_1|-\epsilon)}|f^{n_j}-g|}{(|z'-z_1|-\epsilon)^k}|\cdot-z_1|^k\\
\le\biggl(\sup_{\bD(z_1,|z'-z_1|-\epsilon)}|f^{n_j}-g|\biggr)\cdot
\sum_{k=0}^{d^{n_j}}
\left(\frac{|z'-z_1|+\epsilon}{|z'-z_1|-\epsilon}\right)^k
\end{multline*}
on $\bD(z',\epsilon)$, so since $z'\in\bD(z',\epsilon)\cap\partial U$ and
$\bD(z_1,|z'-z_1|-\epsilon)\Subset U\setminus f^{-n_N}(w_0)$, by \eqref{eq:nonequi}
and \eqref{eq:recurrence}, we have
\begin{multline*}
0<\biggl(\liminf_{j\to\infty}\frac{1}{d^{n_j}}\log\sup_{\bD(z',\epsilon)}|f^{n_j}-g|\\
\le\liminf_{j\to\infty}\frac{1}{d^{n_j}}\log\sup_{\bD(z_1,|z'-z_1|-\epsilon)}|f^{n_j}-g|+\log\frac{|z'-z_1|+\epsilon}{|z'-z_1|-\epsilon}\\
\le\biggr)\liminf_{j\to\infty}\frac{1}{d^{n_j}}\log|\lambda^{(n_j-n_N)/p}-\lambda_0|
+\log\frac{|z'-z_1|+\epsilon}{|z'-z_1|-\epsilon}.
\end{multline*}
This yields \eqref{eq:nonlinearity} as $\epsilon\to 0$.

Once \eqref{eq:compare} and \eqref{eq:nonlinearity} are at our disposal,
using a version of Hartogs's lemma on subharmonic functions 
again,
we have
\begin{gather*}
\phi(z_0)\ge\limsup_{j\to\infty}\frac{\log|(f^{n_j})'(z_0)-a|}{d^{n_j}-1}
\ge\liminf_{j\to\infty}\frac{\log|\lambda^{(n_j-n_N)/p}-\lambda_0|}{d^{n_j}-1}\ge 0,
\end{gather*}
which contradicts $\phi<g_f=0$ on $U=W$.
\end{proof}

For every $a\in\bC\setminus\{0\}$ and every $C^2$-test function
$\phi$ on $\bP^1$, by Lemma \ref{th:potential}, we have
\begin{multline*}
 \left|\int_{\bP^1}\phi\rd\biggl(\frac{((f^n)')^*\delta_a}{d^n-1}-\mu_f\biggr)\right|
=\left|\int_{\bP^1}\phi\rd\rd^c\biggl(\frac{\log|(f^n)'(\cdot)-a|}{d^n-1}
-g_f\biggr)\right|\\
\le\left(\sup_{\bP^1}
\left|\frac{\rd\rd^c\phi}{\rd\omega}\right|\right)\cdot
\int_{\bP^1}\left|\frac{\log|(f^n)'(z)-a|}{d^n-1}
-g_f\right|\rd\omega(z)\to 0\quad\text{as } n\to\infty, 
\end{multline*}
where the Radon-Nikodim derivative $(\rd\rd^c\phi)/\rd\omega$ on $\bP^1$
is bounded on $\bP^1$. \qed

\section{Proof of Theorem \ref{th:quantitative}}
\label{sec:quantitative}

Let $f\in\bC[z]$ be of degree $d>1$, and suppose that $E(f)=\{\infty\}$.
Then
\begin{multline*}
 \sup_{z\in\bC:\text{superattracting periodic point of }f}\limsup_{n\to\infty}(\deg_z(f^n))^{1/n}\\
=\sup_{c\in C(f)\cap\bC:\text{periodic under }f}\limsup_{n\to\infty}(\deg_c(f^n))^{1/n}\in\{1,2,\ldots,d-1\}
\end{multline*}
(recall the convention $\sup_{\emptyset}=1$).
Set $a_d:=a_d(f)=\lim_{n\to\infty}f(z)/z^d\in\bC\setminus\{0\}$.
For every $n\in\bN$, the functions 
$(\log(1/[(f^n)',\infty])/(d^n-1)-g_f$ and
$(\log\max\{1,|(f^n)'|\})/(d^n-1)-g_f$
extend continuously to $\bP^1$.

\begin{lemma}\label{th:criticalquantitative}
For every 
$\eta>\sup_{c\in C(f)\cap\bC:\text{periodic under }f}\limsup_{n\to\infty}(\deg_c(f^n))^{1/n}$, 
\begin{gather*}
\int_{\bP^1}\left|\frac{\log(1/[(f^n)',\infty])}{d^n-1}-g_f\right|\rd\omega
=o((\eta/d)^n)
\end{gather*}
as $n\to\infty$.
\end{lemma}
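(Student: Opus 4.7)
The plan is to reduce the integral to a potential-theoretic quantity controlled by quantitative equidistribution of preimages of the critical points of $f$. Set $u_n := \log(1/[(f^n)',\infty])/(d^n-1) - g_f$ (the function whose $L^1(\omega)$-norm we wish to control) and $v_n := \log|(f^n)'|/(d^n-1) - g_f$. The elementary inequality $-\log[w,\infty] = \log\sqrt{1+|w|^2} \ge \log|w|$ on $\bC$ gives $u_n \ge v_n$, so $u_n^- \le v_n^-$ and $|u_n| \le u_n^+ + v_n^-$. By Lemma \ref{th:Fubini-Study}, $u_n^+ \le Cn/(d^n-1)$ uniformly on $\bP^1$, and formula \eqref{eq:difference} from the proof of Lemma \ref{th:critical} gives $v_n = \alpha_n + O(n/d^n)$ uniformly, where
$$\alpha_n(z) := \frac{1}{d^n-1}\sum_{c \in C(f)\cap\bC}(\deg_c f - 1)\sum_{j=0}^{n-1}\log[f^j(z),c] \le 0,$$
so $v_n^- \le |\alpha_n| + O(n/d^n)$, and the task reduces to showing $\int_{\bP^1}|\alpha_n|\rd\omega = o((\eta/d)^n)$.

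The next step is to evaluate $I_j(c) := \int_{\bP^1}(-\log[f^j(z),c])\rd\omega(z)$ for each $c \in C(f)\cap\bC$. Applying Jensen's formula to the polynomial $z \mapsto f^j(z) - c$ of degree $d^j$ and leading coefficient $a_d^{(d^j-1)/(d-1)}$, together with $\int_{\bP^1}\log|z-z_0|\rd\omega(z) = \log\sqrt{1+|z_0|^2}$, and using the uniform telescoping bound $-\log[f^j,\infty]/d^j - g_f = O(d^{-j})$ (immediate from the definition of $g_f$) together with the Robin-type identity $\int g_f\rd\omega = \int(-\log[w,\infty])\rd\mu_f + (\log|a_d|)/(d-1)$ (which follows from $g_f = \int\log|\cdot-w|\rd\mu_f(w) + (\log|a_d|)/(d-1)$ by Fubini), the leading $d^j$-terms cancel and one obtains
$$I_j(c) = -d^j\,\tilde S_j^c + O(1),\qquad \tilde S_j^c := \int_{\bP^1}(-\log[w,\infty])\,\rd\biggl(\frac{(f^j)^*\delta_c}{d^j} - \mu_f\biggr)(w).$$

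The heart of the argument is now the quantitative estimate $\tilde S_j^c = o((\eta/d)^j)$, uniformly in $c \in C(f)\cap\bC$. Both measures are supported in $K(f)\subset\bC$, on which $-\log[\cdot,\infty]$ is real-analytic; multiplying by a cutoff $\chi \in C^\infty_c(\bC)$ with $\chi \equiv 1$ on $K(f)$ turns this into integration of a $C^2$-test function against $(f^j)^*\delta_c/d^j - \mu_f$. For each $c$ lying outside the polar exceptional set produced by Drasin--Okuyama \cite{DOproximity} for the chosen $\eta$, that result directly delivers the bound. For the (finitely many) periodic critical points $c$, say of period $p$ and local degree $k := \deg_c f^p$ so that $\kappa_c = k^{1/p}$, I would instead use the explicit decomposition
$$(f^{np})^*\delta_c = k^n\delta_c + \sum_{j=0}^{n-1}k^{n-1-j}(f^{jp})^*\eta_c,$$
with $\eta_c := (f^p)^*\delta_c - k\delta_c$ of total mass $d^p - k$, combined with Brolin's equidistribution applied to each $(f^{jp})^*\eta_c/d^{jp}$, to obtain $|\tilde S_{np}^c| = O((\kappa_c/d)^{np})$, and handle the intermediate indices by an analogous analysis; by the hypothesis $\eta > \kappa_c$, this is $o((\eta/d)^j)$.

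Given this rate, the geometric-series bound $\sum_{j=0}^{n-1}\eta^j = O(\eta^n)$ yields $\sum_{j=0}^{n-1}|I_j(c)| = O(n) + o(\eta^n)$; dividing by $d^n - 1$ and summing the weights $\deg_c f - 1$ over $C(f) \cap \bC$ delivers $\int|\alpha_n|\rd\omega = O(n/d^n) + o((\eta/d)^n) = o((\eta/d)^n)$ (using $\eta>1$), which combined with the initial reduction completes the proof. The principal obstacle is the quantitative equidistribution at periodic critical values, which typically lie in the polar exceptional set of Drasin--Okuyama when $\eta$ is close to $\sup_{c}\kappa_c$; handling them forces the explicit local-degree decomposition above rather than a direct appeal to the cited result.
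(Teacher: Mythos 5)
Your reduction is the same as the paper's: you pass through the identity \eqref{eq:difference}, observe that the middle term is $O(n/d^n)$ uniformly, and reduce everything to showing $\sum_{j=0}^{n-1}I_j(c)=o(\eta^n)$ for each critical point $c\in C(f)\cap\bC$, where $I_j(c)=\int_{\bP^1}\log(1/[f^j(z),c])\,\rd\omega(z)$. (Your detour through Jensen's formula to $\tilde S_j^c$ is harmless but unnecessary, since $d^j\tilde S_j^c=-I_j(c)+O(1)$ is just the first main theorem restated; you end up needing exactly the same estimate.) Up to that point the argument is correct and matches the paper.

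The gap is in how you obtain $I_j(c)=o(\eta^j)$. The paper gets this by citing \cite[Theorem 2]{DOproximity}, which asserts this bound for \emph{every} $w\in\bP^1\setminus E(f)$ and every $\eta$ exceeding the superattracting local-degree growth rate --- with \emph{no} polar exceptional set; removing the polar set from Russakovskii--Shiffman for the iteration sequence $(f^n)$ is precisely the content of that paper. You instead assume the result carries a polar exceptional set and that only the periodic critical points could fall into it; but under that (incorrect) reading a fixed non-periodic critical point could equally well lie in the polar set, so that case is not actually covered by your argument either. More seriously, your substitute argument at a periodic critical point does not close: the decomposition $(f^{np})^*\delta_c=k^n\delta_c+\sum_{j=0}^{n-1}k^{n-1-j}(f^{jp})^*\eta_c$ is correct, but Brolin's theorem is purely qualitative, so if $\epsilon_j$ denotes the discrepancy of $(f^{jp})^*\eta_c/((d^p-k)d^{jp})$ against $\mu_f$ tested on $-\log[\cdot,\infty]$, all you can extract is $|\tilde S_{np}^c|\lesssim(k/d^p)^n+\sum_{j=0}^{n-1}(k/d^p)^{n-j}\epsilon_j$, which is $o(1)$ but not $O((\kappa_c/d)^{np})$, nor $o((\eta^p/d^p)^n)$, unless one already knows $\epsilon_j=o((\eta^p/d^p)^j)$ --- i.e.\ quantitative equidistribution for the strictly preperiodic points in $\supp\eta_c$, which is the very statement you are trying to prove. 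The step is circular, and the genuinely hard analytic input (a rate for the proximity function at \emph{arbitrary} non-exceptional points, governed by the superattracting local degrees) is never supplied. The repair is simply to invoke the Drasin--Okuyama theorem in its actual form, at every critical point.
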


\begin{proof}
For every $n\in\bN$, from \eqref{eq:difference},
we have
\begin{multline}
\int_{\bP^1}\left|\frac{\log|(f^n)'(z)|}{d^n-1}-g_f(z)\right|\rd\omega(z)\\
\le\frac{1}{d^n-1}\int_{\bC}\Biggl(\sum_{j=0}^{n-1}\int_{\bP^1}\log\frac{1}{[f^j(z),w]}\rd\omega(z)\Biggr)(\rd\rd^c\log|f'|)(w)+\frac{C_f\cdot n}{d^n-1},
\label{eq:integrated}
\end{multline}
where $C_f>0$ is defined in \eqref{eq:nonzero}.
By \cite[Theorem 2]{DOproximity}, 
for every $\eta>\sup_{c\in C(f)\cap\bC:\text{periodic under }f}\limsup_{n\to\infty}(\deg_c(f^n))^{1/n}$ and every 
$w\in\bC(=\bP^1\setminus E(f)$ under the assumption $E(f)=\{\infty\}$),
we have
\begin{gather*}
 \int_{\bP^1}\log\frac{1}{[f^n(z),w]}\rd\omega(z)=o(\eta^n)
\end{gather*}
as $n\to\infty$,
which with 
Lemma \ref{eq:upper} and
$0\le\log(1/[w,\infty])-\log\max\{1,|w|\}\le\log\sqrt{2}$ on $\bC$
completes the proof.
\end{proof}

\begin{lemma}\label{th:Sodin}
For every $\eta>1$, the Valiron exceptional set
\begin{gather*}
 E_V(((f^n)'),(\eta^n)):=\left\{a\in\bP^1:\limsup_{n\to\infty}
\frac{1}{\eta^n}\int_{\bP^1}\log\frac{1}{[(f^n)'(z),a]}\rd\omega(z)>0\right\}
\end{gather*}
of the sequence $((f^n)')$ of the derivatives of the iterations of $f$
with respect to the sequence $(\eta^n)$ in $\bR_{>0}$
is 
a polar subset in $\bP^1$.
\end{lemma}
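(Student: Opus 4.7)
The plan is a potential-theoretic Borel--Cantelli argument in the spirit of Sodin (cf.~\cite{Sodin92,RS95,RS97}). For each $n\in\bN$, set $\psi_n(a):=\int_{\bP^1}\log(1/[(f^n)'(z),a])\rd\omega(z)$ and let $\mu_n:=((f^n)')_*\omega$, a probability measure on $\bP^1$ with $\mu_n(\{\infty\})=0$, so that $\psi_n(a)=\int_{\bP^1}-\log[w,a]\rd\mu_n(w)$ is the chordal potential of $\mu_n$. Fubini and the $U(2)$-invariance of $\omega$ on $\bP^1$ yield $\int_{\bP^1}\psi_n\rd\omega=c_0$, where $c_0:=\int_{\bP^1}-\log[w,a]\rd\omega(a)$ is a finite constant independent of $w\in\bP^1$.

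Next, I would introduce the finite positive Borel measure $\mu:=\sum_{n\in\bN}\mu_n/\eta^n$ on $\bP^1$, of total mass $1/(\eta-1)$, and its chordal potential
\begin{gather*}
 P_\mu(a):=\int_{\bP^1}-\log[w,a]\rd\mu(w)=\sum_{n\in\bN}\psi_n(a)/\eta^n\in[0,+\infty],
\end{gather*}
where the second equality is monotone convergence since each $-\log[w,a]\ge 0$. Then Fubini again gives $\int_{\bP^1}P_\mu\rd\omega=c_0/(\eta-1)<\infty$, so $P_\mu<+\infty$ for $\omega$-almost every $a$.

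The key step would then be to show that $\{a\in\bC:P_\mu(a)=+\infty\}$ is polar. For every $w\in\bP^1$ the function $g_w(a):=\log[w,a]+\frac{1}{2}\log(1+|a|^2)$ equals $\log|w-a|-\frac{1}{2}\log(1+|w|^2)$ when $w\in\bC$ and is identically $0$ when $w=\infty$; in either case it is subharmonic in $a$ on $\bC$, with the uniform upper bound $g_w(a)\le\log(1+|a|)$ independent of $w\in\bP^1$. Integrating against the finite measure $\mu$ yields that
\begin{gather*}
 v(a):=\int_{\bP^1}g_w(a)\rd\mu(w)=-P_\mu(a)+\frac{\mu(\bP^1)}{2}\log(1+|a|^2)
\end{gather*}
is subharmonic on $\bC$, and since $P_\mu<+\infty$ $\omega$-a.e., $v$ is not identically $-\infty$. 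By the standard fact that the $-\infty$-set of a subharmonic function $\not\equiv-\infty$ is polar, $\{v=-\infty\}$ is polar in $\bC$; since $\frac{1}{2}\log(1+|a|^2)$ is finite on $\bC$, this set coincides with $\{P_\mu=+\infty\}\cap\bC$.

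Finally, if $a\in\bC$ satisfies $\limsup_n\psi_n(a)/\eta^n>0$, then $\psi_n(a)/\eta^n\ge\epsilon$ for some $\epsilon>0$ and infinitely many $n$, so $P_\mu(a)=\sum_n\psi_n(a)/\eta^n=+\infty$. Hence $E_V(((f^n)'),(\eta^n))\cap\bC\subset\{P_\mu=+\infty\}\cap\bC$ is polar in $\bC$, and adjoining the polar singleton $\{\infty\}$ yields polarity in $\bP^1$. The main technical point is the construction of the subharmonic auxiliary $v$, for which the uniform upper bound on the family $\{g_w\}_{w\in\bP^1}$ is essential in order to integrate against the finite measure $\mu$ even in the regime $\eta\le d$, where $\mu$ may fail to have finite logarithmic energy at infinity.
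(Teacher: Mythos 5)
Your argument is correct. The paper's own proof of this lemma is a one-line citation of Russakovskii--Shiffman \cite[Proposition 6.2]{RS97}, invoked precisely because $\sum_{n\in\bN}\eta^{-n}<\infty$ for $\eta>1$; what you have written is, in effect, a self-contained proof of that proposition in the one-dimensional setting. The mechanics are the standard ones behind the cited result: push $\omega$ forward under $(f^n)'$ to get $\mu_n$, form the finite measure $\mu=\sum_n\eta^{-n}\mu_n$ (finiteness of the total mass is exactly where the hypothesis $\sum_n\eta^{-n}<\infty$ enters, mirroring the paper's justification for the citation), note that the Valiron exceptional set is contained in $\{P_\mu=+\infty\}$ by a divergence argument, and identify that set, up to the harmless point $\infty$, with the $-\infty$-set of a subharmonic function $\not\equiv-\infty$, hence a polar set. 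The two points that need care are both handled correctly: subharmonicity of $v=\int_{\bP^1}g_w(\cdot)\,\rd\mu(w)$ does require the locally uniform upper bound $g_w(a)\le\log(1+|a|)$ together with finiteness of $\mu$ (and indeed $g_w(a)=\log([w,a]\sqrt{1+|a|^2})\le\tfrac12\log(1+|a|^2)$ since $[\cdot,\cdot]\le1$), and $v\not\equiv-\infty$ follows from $\int_{\bP^1}P_\mu\,\rd\omega=c_0/(\eta-1)<\infty$ via the rotation invariance of $\omega$. Your route buys self-containedness and avoids the higher-dimensional machinery of \cite{RS97}, at the cost of reproving a known proposition; it yields exactly the same conclusion as the paper's citation.
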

\begin{proof}
 This is an application of Russakovskii--Shiffman 
 \cite[Proposition 6.2]{RS97} to the sequence $((f^n)')$ 
 in $\bC[z]$
 since $\sum_{n\in\bN}1/\eta^n<\infty$ for every $\eta>1$.
\end{proof}

For every 
$\eta>\sup_{c\in C(f)\cap\bC:\text{periodic under }f}\limsup_{n\to\infty}(\deg_c(f^n))^{1/n}$,
every $a\in\bC\setminus E_V(((f^n)'),(\eta^n))$, and every $C^2$-test function
$\phi$ on $\bP^1$, 
by Lemmas \ref{th:criticalquantitative} and \ref{th:Sodin},
we have
\begin{multline*}
 \left|\int_{\bP^1}\phi\rd\biggl(\frac{((f^n)')^*\delta_a}{d^n-1}-\mu_f\biggr)\right|\\
=\left|\int_{\bP^1}\phi\rd\rd^c\biggl(\frac{\log[(f^n)',a]}{d^n-1}
+\frac{\log(1/[(f^n)',\infty])}{d^n-1}-g_f\biggr)\right|\\
\le\left(\sup_{\bP^1}\left|\frac{\rd\rd^c\phi}{\rd\omega}\right|\right)\cdot\\
\left(\frac{1}{d^n-1}\int_{\bP^1}\log\frac{1}{[(f^n)'(z),a]}\rd\omega(z)
+\int_{\bP^1}\left|\frac{\log(1/[(f^n)'(z),\infty])}{d^n-1}-g_f\right|\rd\omega(z)\right)\\
=o((\eta/d)^n)\quad\text{as }n\to\infty,
\end{multline*}
where 
the Radon-Nikodim derivative $(\rd\rd^c\phi)/\rd\omega$ on $\bP^1$
is bounded on $\bP^1$. \qed

\section{Proof of Theorem \ref{th:unicrit}}\label{sec:unicrit}

Let $f:\bC\times\bP^1\ni(\lambda,z)\mapsto z^d+\lambda=:f_\lambda(z)\in\bP^1$
be the monic and centered 
unicritical polynomials family of degree $d>1$. For every $n\in\bN$,
$f_\lambda^n(\lambda),(f_\lambda^n)'(\lambda)\in\bC[\lambda]$
are of degree $d^n, d^n-1$, respectively.

\subsection{Background on the family $f$}
Recall the definitions in Subsection \ref{sec:polynomial}.
The following constructions are due to
Douady--Hubbard \cite{DH82} and Sibony.

For every $\lambda\in\bC$, 
$f_\lambda'(z)=d\cdot z^{d-1}$, so $C(f_\lambda)\cap\bC=\{0\}$ 
and $f_\lambda(0)=\lambda$.
The connectedness locus
$C_d:=\{\lambda\in\bC:\lambda\in K(f_\lambda)\}$
of the family $f$ is a compact subset in $\bC$, 
and $H_\infty=H_{d,\infty}:=\bP^1\setminus C_d$
is a simply connected domain containing $\infty$ in $\bP^1$. Moreover,
the locally uniform limit
\begin{gather*}
 g_{H_\infty}(\lambda):=g_{f_\lambda}(\lambda)=d\cdot g_{f_\lambda}(0)
=\lim_{n\to\infty}\frac{-\log[f_\lambda^n(\lambda),\infty]}{d^n}
\end{gather*}
exists on $\bC$.
Setting $g_{H_\infty}(\infty):=+\infty$,
the restriction of $g_{H_\infty}$ to $H_\infty$ coincides with the 
Green function on $H_\infty$ with pole $\infty$,
and the measure 
\begin{gather*}
 \mu_{C_d}:=\rd\rd^cg_{H_\infty}+\delta_\infty\quad\text{on }\bP^1
\end{gather*}
coincides with the harmonic measure on $C_d$
with pole $\infty$. In particular, $z\mapsto g_{H_\infty}(z)-\log|z|$ 
extends harmonically to an open neighborhood of $\infty$
in $H_\infty$, and $\supp\mu_{C_d}\subset\partial C_d$
(in fact, the equality holds). 

\subsection{Proof of Theorem \ref{th:unicrit}}
For every $n\in\bN$, 
$\lambda\mapsto(\log|(f_\lambda^n)'(\lambda)|)/(d^n-1)-g_{H_\infty}(\lambda)$ and
$\lambda\mapsto(\log\max\{1,|(f_\lambda^n)'(\lambda)|\})/(d^n-1)-g_{H_\infty}(\lambda)$ on $\bC$ extend continuously to $\bP^1$. 

\begin{lemma}\label{th:upperunicrit}
For every $n\in\bN$ and every $\lambda\in\bC$, 
 \begin{gather}
 \frac{\log\max\{1,|(f_\lambda^n)'(\lambda)|\}}{d^n-1}-g_{H_\infty}(\lambda)
 \le
\frac{n\log(d^2)}{d^n-1}.\tag{\ref{eq:upper}$'$}
 \end{gather}
\end{lemma}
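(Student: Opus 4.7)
My plan is to reduce the estimate to a single application of Buff's inequality (Theorem \ref{th:deBranges}), applied to the iterate $f_\lambda^n\in\bC[z]$ itself (which has degree $d^n$) rather than to $f_\lambda$ at each orbit point.

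First, I would assemble the standard facts for the unicritical family: $g_{f_\lambda^n}=g_{f_\lambda}$ on $\bP^1$, $g_{f_\lambda}\circ f_\lambda=d\cdot g_{f_\lambda}$, and $C(f_\lambda)\cap\bC=\{0\}$ with $g_{f_\lambda}(0)=g_{H_\infty}(\lambda)/d$. From the chain rule,
\begin{gather*}
C(f_\lambda^n)\cap\bC=\bigcup_{j=0}^{n-1}f_\lambda^{-j}(\{0\}),
\end{gather*}
and for $c\in f_\lambda^{-j}(\{0\})$ one has $g_{f_\lambda}(c)=g_{f_\lambda}(0)/d^j$. Hence
\begin{gather*}
\max_{c\in C(f_\lambda^n)\cap\bC}g_{f_\lambda^n}(c)=g_{f_\lambda}(0)=\frac{g_{H_\infty}(\lambda)}{d}.
\end{gather*}

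Next I would check the hypothesis of Theorem \ref{th:deBranges} (for the polynomial $f_\lambda^n$, at the point $z_0:=\lambda$): since $g_{H_\infty}(\lambda)\ge 0$ and $d>1$, the required inequality $g_{f_\lambda^n}(\lambda)=g_{H_\infty}(\lambda)\ge g_{H_\infty}(\lambda)/d$ is automatic. Applying Buff's estimate to $f_\lambda^n$ then gives
\begin{gather*}
|(f_\lambda^n)'(\lambda)|\le (d^n)^2\cdot e^{(d^n-1)g_{H_\infty}(\lambda)},
\end{gather*}
i.e., $\log|(f_\lambda^n)'(\lambda)|\le n\log(d^2)+(d^n-1)g_{H_\infty}(\lambda)$. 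Since the right-hand side is non-negative (both terms are), the inequality persists with $\log|(f_\lambda^n)'(\lambda)|$ replaced by $\log\max\{1,|(f_\lambda^n)'(\lambda)|\}$, and dividing by $d^n-1$ yields (\ref{eq:upper}$'$).

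There is essentially no obstacle; the only point requiring a moment's thought is the choice of iteration level at which to invoke Buff. One could equally apply Theorem \ref{th:deBranges} to $f_\lambda$ at each of the points $f_\lambda^j(\lambda)$ (where the hypothesis reads $d^{j+1}g_{f_\lambda}(0)\ge g_{f_\lambda}(0)$, again automatic) and then use the chain rule together with the telescoping sum $\sum_{j=0}^{n-1}d^j=(d^n-1)/(d-1)$; but the single-shot application to $f_\lambda^n$ is cleaner and produces exactly the constant $n\log(d^2)$ appearing on the right.
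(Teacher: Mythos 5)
Your proof is correct and follows essentially the same route as the paper: both apply Buff's inequality (Theorem \ref{th:deBranges}) directly to the iterate $f_\lambda^n$ at the point $z_0=\lambda$, after verifying via $g_{f_\lambda}(\lambda)=d\cdot g_{f_\lambda}(0)\ge\max_{c\in C(f_\lambda^n)\cap\bC}g_{f_\lambda^n}(c)$ that the hypothesis holds, and then use $g_{H_\infty}(\lambda)\ge 0$ to pass to $\log\max\{1,\cdot\}$. Your explicit computation of the critical values of $g_{f_\lambda^n}$ is a slightly more detailed version of the identity $\max_{c\in C(f_\lambda)\cap\bC}g_{f_\lambda}(c)=\max_{c\in C(f_\lambda^n)\cap\bC}g_{f_\lambda^n}(c)$ that the paper invokes.
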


\begin{proof}
For every $n\in\bN$ and every $\lambda\in\bC$, by $g_{f_\lambda^n}=g_{f_\lambda}$
on $\bP^1$
and $g_{f_\lambda}\circ f_\lambda=d\cdot g_{f_\lambda}$ on $\bP^1$, we have
$g_{f_\lambda^n}(\lambda)=g_{f_\lambda}(\lambda)=
d\cdot g_{f_\lambda}(0)\ge g_{f_\lambda}(0)
=\max_{c\in C(f_\lambda)\cap\bC}g_{f_\lambda}(c)
=\max_{c\in C(f_\lambda^n)\cap\bC}g_{f_\lambda^n}(c)$, 
so by Theorem \ref{th:deBranges}, we have
$|(f_\lambda^n)'(\lambda)|\le (d^n)^2e^{(d^n-1)g_{f_\lambda^n}(\lambda)}
=(d^n)^2e^{(d^n-1)g_{f_\lambda}(\lambda)}
=(d^n)^2e^{(d^n-1)g_{H_\infty}(\lambda)}$. This 
with $g_{H_\infty}(\lambda)\ge 0$ completes the proof.
\end{proof}

\begin{lemma}\label{th:FSparam}
\begin{gather*}
\int_{\bP^1}\left|\frac{\log(1/[(f_\lambda^n)'(\lambda),\infty])}{d^n-1}-g_{H_\infty}(\lambda)\right|\rd\omega(\lambda)
=O(n^2d^{-n})
\end{gather*} 
as $n\to\infty$.
\end{lemma}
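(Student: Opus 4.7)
The plan is to reduce the $L^1$ estimate to a signed-integral computation using the explicit chain-rule factorization of $(f_\lambda^n)'(\lambda)$, and then to invoke the quantitative equidistribution of superattracting parameters due to Gauthier--Vigny \cite{GV15}. Since $\log(1/[z,\infty])-\log\max\{1,|z|\}$ is bounded on $\bC$, replacing $\log(1/[(f_\lambda^n)'(\lambda),\infty])$ by $\log\max\{1,|(f_\lambda^n)'(\lambda)|\}$ in the integrand costs only $O(1/d^n)$. Setting $u_n(\lambda) := \frac{\log\max\{1,|(f_\lambda^n)'(\lambda)|\}}{d^n-1}-g_{H_\infty}(\lambda)$, Lemma \ref{th:upperunicrit} yields the pointwise bound $u_n \le n\log(d^2)/(d^n-1)$, hence $\int_{\bP^1} u_n^+ \rd\omega = O(n/d^n)$.

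Introduce the auxiliary $v_n(\lambda) := \frac{\log|(f_\lambda^n)'(\lambda)|}{d^n-1}-g_{H_\infty}(\lambda)$. Since $\log^+ \ge \log$, we have $v_n \le u_n$ pointwise, so $v_n^+ \le u_n^+$ and $u_n^- \le v_n^-$; hence $\int|u_n|\rd\omega \le \int u_n^+\rd\omega + \int v_n^-\rd\omega$, and via $\int v_n^-\rd\omega = \int v_n^+\rd\omega - \int v_n\rd\omega$, the task reduces to estimating the signed integral $\int v_n\rd\omega$. The chain rule gives $(f_\lambda^n)'(\lambda) = d^n\prod_{k=1}^n P_k(\lambda)^{d-1}$, where $P_k(\lambda) := f_\lambda^k(0) = f_\lambda^{k-1}(\lambda)$ is monic of degree $d^{k-1}$ in $\lambda$. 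Combined with the Fubini--Study identity $\int_{\bP^1}\log|\lambda-a|\rd\omega(\lambda) = -\log[a,\infty]$, this yields $\int\log|P_k|\rd\omega = d^{k-1}\int(-\log[\cdot,\infty])\rd\nu_k$, where $\nu_k := d^{1-k}\sum_{P_k(a)=0}\delta_a$. The monic unicritical family admits a B\"ottcher coordinate at $\infty$ in parameter space (giving $g_{H_\infty}(\lambda) = \log|\lambda| + o(1)$ as $\lambda \to \infty$), so the Robin constant of $C_d$ vanishes and $\kappa_d := \int g_{H_\infty}\rd\omega = \int(-\log[\cdot,\infty])\rd\mu_{C_d}$. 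Assembling,
\[ (d^n-1)\int_{\bP^1} v_n\rd\omega = n\log d + (d-1)\sum_{k=1}^n d^{k-1}\Bigl(\int\phi\rd\nu_k - \int\phi\rd\mu_{C_d}\Bigr), \]
where $\phi(\lambda) := -\log[\lambda,\infty]$.

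The zeros of $P_k$ are precisely the parameters for which $0$ is periodic of period dividing $k$ under $f_\lambda$, i.e., superattracting parameters, all of which lie in the compact set $C_d\subset\bC$. Since $\supp\nu_k\cup\supp\mu_{C_d} \subset C_d$ is compactly contained in $\bC$, we may replace $\phi$ by any $C^2(\bP^1)$-function agreeing with $\phi$ on a neighborhood of $C_d$ without changing either integral, and then apply the quantitative equidistribution $\nu_k \to \mu_{C_d}$ from Gauthier--Vigny \cite{GV15} to obtain $|\int\phi\rd\nu_k-\int\phi\rd\mu_{C_d}| = O(kd^{-k})$ (in the appropriate $C^2$-norm). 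The telescoped sum is then $O(n^2)$, giving $\int v_n\rd\omega = O(n^2/d^n)$, $\int v_n^-\rd\omega = O(n^2/d^n)$, and finally $\int_{\bP^1}|u_n|\rd\omega = O(n^2/d^n)$.

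The main obstacle is the discrepancy between $\log^+$ and the signed $\log$: the naive splitting $\log^+ = \log + \log^-$ would introduce a correction $\int\log^-|(f_\lambda^n)'|\rd\omega$ that need not decay in $n$, so we instead work with $v_n$ and exploit the elementary sign inequality $u_n \ge v_n$ to transfer the bound back to $u_n^-$. A secondary subtlety is verifying the vanishing of the Robin constant of $C_d$, a fact specific to the monic unicritical family, without which a non-decaying $d^{n-1}\gamma_d$ term would survive and spoil the estimate.
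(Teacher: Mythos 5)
Your proof is correct, and it rests on the same two pillars as the paper's --- the chain-rule identity $(f_\lambda^n)'(\lambda)=d^n\prod_{k=1}^n\bigl(f_\lambda^k(0)\bigr)^{d-1}$ (the specialization of \eqref{eq:equality} to the unicritical family), and the quantitative equidistribution of superattracting parameters from \cite{GV15} --- but the middle of the argument is genuinely different. The paper takes absolute values \emph{before} integrating: from $\log|(f_\lambda^n)'(\lambda)|=n\log d+(d-1)\sum_{j=0}^{n-1}\log|f_\lambda^{j+1}(0)|$ it applies the triangle inequality pointwise and then invokes the $L^1$-form of the Gauthier--Vigny estimate, $\int_{\bP^1}\bigl|\log|f_\lambda^{j+1}(0)|-d^j g_{H_\infty}\bigr|\,\rd\omega=O(j)$, termwise, which bounds $\int|v_n|\,\rd\omega$ in one stroke. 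You instead evaluate the \emph{signed} integral $\int v_n\,\rd\omega$ exactly via Fubini--Study potentials of the zero divisors of $f_\lambda^k(0)$, which obliges you to (a) verify the vanishing of the Robin constant of $C_d$ so that $\int g_{H_\infty}\,\rd\omega=\int(-\log[\cdot,\infty])\,\rd\mu_{C_d}$, and (b) recover the missing half of $\int|v_n|$ from the pointwise bound of Lemma \ref{th:upperunicrit} through $u_n^-\le v_n^-$ and $v_n^+\le u_n^+$; both steps are carried out correctly, and the sign bookkeeping in (b) is in fact the same maneuver the paper performs implicitly in its final sentence. What your version buys is that it needs \cite{GV15} only in the weak-$*$, single-test-function form $|\int\phi\,\rd\nu_k-\int\phi\,\rd\mu_{C_d}|=O(kd^{-k})$ rather than in $L^1$; what it costs is the extra potential-theoretic overhead (Robin constant, truncation of $-\log[\cdot,\infty]$ near $\infty$). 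Note that the weak-$*$ speed you quote is itself most readily obtained by integrating the paper's $L^1$ estimate by parts against a $C^2$ truncation of $-\log[\cdot,\infty]$, so the two arguments ultimately draw on the same result of \cite{GV15}.
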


\begin{proof}
 For every $n\in\bN$, by the third equality 
 in \eqref{eq:equality} for $f_\lambda$ evaluated at $z=\lambda$,
 we have
 \begin{gather*}
 \frac{\log|(f_\lambda^n)'(\lambda)|}{d^n-1}
 -\frac{n\log d}{d^n-1}
 =\frac{d-1}{d^n-1}\sum_{j=0}^{n-1}\log|f_\lambda^j(\lambda)|
 =\frac{d-1}{d^n-1}\sum_{j=0}^{n-1}\log|f_\lambda^{j+1}(0)|,
 \end{gather*} 
 so that 
 \begin{multline}
 \int_{\bP^1}\left|\frac{\log|(f_\lambda^n)'(\lambda)|}{d^n-1}-g_{H_\infty}(\lambda)\right|\rd\omega(\lambda)\\
 \le\frac{d-1}{d^n-1}\sum_{j=0}^{n-1}
 \int_{\bP^1}\left|\log|f_\lambda^{j+1}(0)|
 -d^j\cdot g_{H_\infty}(\lambda)\right|\rd\omega(\lambda)
 +\frac{n\log d}{d^n-1}\\
 =O(n^2d^{-n})\quad\text{as }n\to\infty
\tag{$\ref{eq:integrated}'$}
 \end{multline}
since by 
Gauthier--Vigny \cite[\S 4.3, Proof of Theorem A]{GV15}, we have
\begin{gather*}
 \int_{\bP^1}\left|\log|f_\lambda^{n+1}(0)|
 -d^n\cdot g_{H_\infty}(\lambda)\right|\rd\omega(\lambda)
 =O(n)
\end{gather*}
as $n\to\infty$.
This with Lemma \ref{th:upperunicrit} and
$0\le\log(1/[w,\infty])-\log\max\{1,|w|\}\le\log\sqrt{2}$ on $\bC$
completes the proof.
\end{proof}

\begin{lemma}\label{th:Sodinparam}
For every $\eta>1$, the Valiron exceptional set
\begin{gather*}
 E_V(((f_\lambda^n)'(\lambda)),(\eta^n))
:=\left\{a\in\bP^1:\limsup_{n\to\infty}
\frac{1}{\eta^n}\int_{\bP^1}\log\frac{1}{[(f_\lambda^n)'(\lambda),a]}\rd\omega(\lambda)>0\right\}
\end{gather*}
of the sequence $((f_\lambda^n)'(\lambda))$ in $\bC[\lambda]$
with respect to the sequence $(\eta^n)$ in $\bR_{>0}$ is 
a polar subset in $\bP^1$.
\end{lemma}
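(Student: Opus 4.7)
The plan is to apply Russakovskii--Shiffman \cite[Proposition 6.2]{RS97} directly to the sequence $((f_\lambda^n)'(\lambda))_{n\in\bN}$, regarded as a sequence of polynomials in $\bC[\lambda]$, in exact analogy with the proof of Lemma \ref{th:Sodin} (where the same result was invoked for the sequence of derivatives $((f^n)')$ in the dynamical variable). Two inputs need to be checked: (i) that each $(f_\lambda^n)'(\lambda)$ is genuinely a nonconstant polynomial in the parameter $\lambda$, and (ii) that the summability condition $\sum_{n\in\bN}\eta^{-n}<\infty$, which is the hypothesis of the Russakovskii--Shiffman theorem governing the weight sequence $(\eta^n)$, is satisfied.

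Both verifications are immediate. For (i), the opening sentence of Section \ref{sec:unicrit} records that $(f_\lambda^n)'(\lambda)\in\bC[\lambda]$ has degree $d^n-1>0$, which follows at once from the unicritical form $f_\lambda(z)=z^d+\lambda$ by induction on $n$. For (ii), the geometric series $\sum_{n\in\bN}\eta^{-n}$ converges for every $\eta>1$. Once these are in hand, Russakovskii--Shiffman \cite[Proposition 6.2]{RS97} gives exactly the polarity of the associated Valiron exceptional set in $\bP^1$, which is the desired conclusion.

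I do not expect any genuine obstacle at this step. The Russakovskii--Shiffman result is formulated abstractly for sequences of polynomials on $\bP^1$ and is insensitive to whether the polynomial variable is interpreted as a dynamical or a parameter variable; consequently, the argument runs in complete parallel with Lemma \ref{th:Sodin}. The substantive dynamical content of Theorem \ref{th:unicrit} is encoded not in this lemma but in the pair Lemmas \ref{th:upperunicrit}--\ref{th:FSparam}, which relies on the Gauthier--Vigny quantitative equidistribution of superattracting parameters, whereas the present lemma is a purely Nevanlinna-theoretic statement with a one-line proof.
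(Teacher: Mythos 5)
Your proof is correct and is essentially identical to the paper's: the paper's proof of Lemma \ref{th:Sodinparam} is exactly the one-line application of Russakovskii--Shiffman \cite[Proposition 6.2]{RS97} to the sequence $((f_\lambda^n)'(\lambda))$ in $\bC[\lambda]$, citing only the summability $\sum_{n\in\bN}1/\eta^n<\infty$ for $\eta>1$. Your additional check that each $(f_\lambda^n)'(\lambda)$ is a nonconstant polynomial of degree $d^n-1$ is a harmless (and correct) extra verification.
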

\begin{proof}
 This is an application of Russakovskii--Shiffman 
 \cite[Proposition 6.2]{RS97} to 
 the sequence $((f_\lambda^n)'(\lambda))$ in $\bC[\lambda]$
 since $\sum_{n\in\bN}1/\eta^n<\infty$ for every $\eta>1$.
\end{proof}

For every $\eta>1$,
every $a\in\bC\setminus E_V(((f_\lambda^n)'(\lambda)),(\eta^n))$, 
and every $C^2$-test function $\phi$ on $\bP^1$, 
by Lemmas \ref{th:FSparam} and \ref{th:Sodinparam}, 
we have
\begin{multline*}
 \left|\int_{\bP^1}\phi(\lambda)\rd\biggl(\frac{((f_\lambda^n)'(\lambda))^*\delta_a}{d^n-1}
-\mu_{C_d}\biggr)(\lambda)\right|\\
=\left|\int_{\bP^1}\phi(\lambda)\rd\rd^c\biggl(\frac{\log[(f_\lambda^n)'(\lambda),a]}{d^n-1}
+\frac{\log(1/[(f_\lambda^n)'(\lambda),\infty])}{d^n-1}-g_{H_\infty}(\lambda)\biggr)\right|\\
\le\left(\sup_{\bP^1}\left|\frac{\rd\rd^c\phi}{\rd\omega}\right|\right)\cdot\\
\biggl(\frac{1}{d^n-1}\int_{\bP^1}\log\frac{1}{[(f_\lambda^n)'(\lambda),a]}\rd\omega(\lambda)
+\int_{\bP^1}\left|\frac{\log(1/[(f_\lambda^n)(\lambda),\infty])}{d^n-1}
-g_{H_\infty}(\lambda)\right|\rd\omega(\lambda)\biggr)\\
=o((\eta/d)^n)\quad\text{as }n\to\infty,
\end{multline*}
where 
the Radon-Nikodim derivative $(\rd\rd^c\phi)/\rd\omega$ on $\bP^1$
is bounded on $\bP^1$. \qed

\begin{acknowledgement}
The author thanks the referee for a very careful scrutiny and
invaluable comments.
This research was partially supported by JSPS Grant-in-Aid 
for Scientific Research (C), 15K04924.
\end{acknowledgement}

\def\cprime{$'$}

\end{document}